\newtheorem{theorem}{Theorem}
\newtheorem{defn}[theorem]{Definition}
\newtheorem{remark}[theorem]{Remark}
\newtheorem{example}[theorem]{Example}
\newtheorem{corollary}[theorem]{Corollary}
\newtheorem{definition}[theorem]{Definition}
\newtheorem{proposition}[theorem]{Proposition}
\newtheorem{notation}[theorem]{Notation}
\newtheorem{prop}[theorem]{Proposition}
\begin{document}
\def\F{{\mathbb F}}
\title{On flat deformations and their applications}
\author{ Agata Smoktunowicz}
\date{ }
\maketitle

\begin{abstract} 

We say that a formal deformation from an algebra $N$
 to an algebra $A$ 
is strongly flat if for every real number $\epsilon $ there is a real number $0<s<\epsilon$ such that this deformation specialised at $t=s$ gives an algebra isomorphic to  $A$. It is shown that all semisimple algebras which can be obtained as a specialisation of  such a deformation   are isomorphic.

   We  also show that every strongly flat deformation  $\mathcal N=N\{t\}$  from a finite-dimensional $\mathbb C$-algebra $N$ to a semisimple $\mathbb C$-algebra $A$  
  specialised at $t=s$ for all sufficiently small real  numbers $s>0$ gives an  algebra isomorphic to $A$.  A remark by Joachim Jelisiejew is also included  which allows us to obtain this result as an application of Gabriel's theorem \cite{Gabriel}. 

 We also give a characterisation of semisimple algebras $A$ to which  a given algebra $N$ cannot be  strongly flatly  deformed.   This gives a partial answer to a question of Michael Wemyss's  on  Acons \cite{Wemyssprivatecomm} as well as
 a  partial answer to question 6.5 from \cite{DDS}.

\end{abstract}

\section{Introduction}
  
 This paper is motivated by some questions asked by Michael Wemyss about deformations of  contraction algebras, in particular his question of whether  a given contraction algebra can be flatly deformed to only one semisimple algebra. We also provide some criteria for finding semisimple algebras $A$ to which a given algebra cannot be deformed. 
 
  Contraction algebras  have  applications in algebraic geometry in particular in resolutions of singularities and in connection to geometric invariants. Deformations of contraction algebras into semisimple algebras are related to  Gopakumar-Vafa invariants.  Contraction algebras were introduced by Donovan and Wemyss in \cite{DW} and they are certain factors of the maximal modification algebras (MMAs) developed by Iyama and  Wemyss in \cite{I}.

We only consider deformations $N\{t\}$ of $N$ in which the image of the identity element in $N$ acts as the identity element in $N\{t\}$. We will denote $N\{t\}$ by $\mathcal N.$

We say that a formal deformation from an algebra $N$ to an  algebra $A$ is strongly flat if for every real number $\epsilon $ there is a real number $0<s<\epsilon $ such that this deformation specialised at $t=s$ gives an algebra isomorphic to $A$.

  Our first result is: 
\begin{proposition}\label{main} Let $\mathcal N$ be a strongly flat deformation 
from a finite-dimensional $\mathbb C$-algebra $N$ to a semisimple $\mathbb C$-algebra $A$ of the same dimension. 
Then the deformation $\mathcal N$ specialised at  $t=s$ for all sufficiently small real  numbers $s>0$ gives an  algebra isomorphic to $A$.
 In particular if $\mathcal N$ is also a  strongly flat deformation from $N$ to a semisimple algebra $B$, then $A$ and $B$ are isomorphic $\mathbb C$-algebras.  
\end{proposition}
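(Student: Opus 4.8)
The plan is to regard the underlying vector space of $N$ as a fixed $n$-dimensional $\mathbb C$-space, to view each specialisation $\mathcal N_s$ as a point $c(s)$ of the affine variety $\mathrm{Alg}_n$ of unital associative $\mathbb C$-algebra structures on that space (with $c(0)$ the structure of $N$), and to use that $s\mapsto c(s)$ is given by real-analytic — and in the algebraic case polynomial — functions of $s$ near $0$, the structure constants of the deformation being analytic in $t$ wherever the specialisation at real $t=s$ is defined. Strong flatness then supplies a sequence $s_k\downarrow 0$ with $c(s_k)$ lying in the $\mathrm{GL}_n(\mathbb C)$-orbit $O_A\subseteq\mathrm{Alg}_n$ of the structure of $A$.

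First I would prove that $\mathcal N_s$ is semisimple for all sufficiently small $s>0$. Over $\mathbb C$ a finite-dimensional algebra $B$ is semisimple precisely when the trace form $T_B(x,y)=\operatorname{tr}(L_{xy})$ of its regular representation is nondegenerate: if $B$ has nonzero (nilpotent) radical $J$ then $L_{xy}$ is nilpotent for $x\in J$ and $y\in B$, so $J$ lies in the radical of $T_B$, while conversely $T_B$ is block-nondegenerate on any product $\prod_i M_{n_i}(\mathbb C)$. In a fixed basis the Gram determinant $D(s)$ of $T_{\mathcal N_s}$ is a polynomial in the structure constants, hence real-analytic in $s$ near $0$, and $D(s_k)\neq 0$; therefore $D$ is not identically $0$ near $0$, its zeros near $0$ are isolated, and there is $\delta>0$ with $D(s)\neq 0$ — equivalently $\mathcal N_s$ semisimple — for all $0<s<\delta$.

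Second, I would show the isomorphism type of $\mathcal N_s$ is constant on $(0,\delta)$. Every finite-dimensional semisimple $\mathbb C$-algebra $A'$ is separable, so $H^2(A',A')=0$ and $A'$ is rigid, i.e. its orbit $O_{A'}$ is open in $\mathrm{Alg}_n$; alternatively one lifts a full system of matrix units from a semisimple $\mathcal N_{s_0}$ to all nearby $\mathcal N_s$ via the implicit function theorem, the relevant differential being surjective exactly because $\mathcal N_{s_0}$ is semisimple. Since there are only finitely many isomorphism classes of $n$-dimensional semisimple $\mathbb C$-algebras, the semisimple locus $\mathrm{Alg}_n^{\mathrm{ss}}$ is a finite disjoint union of these open orbits; hence each orbit is also closed in $\mathrm{Alg}_n^{\mathrm{ss}}$ and the orbits are exactly its connected components. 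The continuous curve $s\mapsto c(s)$ maps the connected set $(0,\delta)$ into $\mathrm{Alg}_n^{\mathrm{ss}}$, hence into a single orbit; as $c(s_k)\in O_A$ for some $s_k\in(0,\delta)$, that orbit is $O_A$, so $\mathcal N_s\cong A$ for all $0<s<\delta$. The last sentence is then immediate: applying this to $A$ and to $B$ gives $\delta_A,\delta_B>0$ with $\mathcal N_s\cong A$ for $0<s<\delta_A$ and $\mathcal N_s\cong B$ for $0<s<\delta_B$, whence $A\cong\mathcal N_s\cong B$ for any $0<s<\min(\delta_A,\delta_B)$.

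The step I expect to be the main obstacle — or at least the one demanding the most care — is the regularity input at the start: for a genuinely formal deformation one must pin down in what sense "specialised at $t=s$" is defined on a full neighbourhood of $0$ and depends analytically on $s$, since the argument that $D$ has only isolated zeros uses precisely this. A secondary point is to keep everything compatible with the paper's convention that the identity of $N$ maps to the identity of $\mathcal N$, so that $N$, $A$ and all the $\mathcal N_s$ genuinely lie in the single unital variety $\mathrm{Alg}_n$, and to decide whether to invoke the cohomological rigidity of semisimple algebras or instead to establish openness of $O_{A'}$ by the more hands-on matrix-unit lifting.
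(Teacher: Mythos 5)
Your argument is correct, but it takes a genuinely different route from the paper. The paper works algebraically with the generic fibre: it forms the $K$-algebra $E$ over the field of fractions $K$ of the convergent power series ring, proves $E$ is semisimple (Proposition \ref{semisimple}), invokes Tsen's theorem and the $C_{1}$ property of $K$ to see that the division algebras in the Wedderburn decomposition of $E$ are commutative (Remark \ref{commutative}), matches the block structure of $E$ with that of $A$ by comparing evaluations of the standard identities $s_{2m}$ (Proposition \ref{polynomialidentity}), and finally specialises an explicit Wedderburn decomposition of $E$ at $t=s$ via the homomorphism $\sigma$, checking that the resulting algebra $N_{s}'$ is an $n$-dimensional homomorphic image of $N_{s}$ isomorphic to $A$. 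You instead run the classical geometric argument on the variety $\mathrm{Alg}_{n}$: semisimplicity of $\mathcal N_{s}$ for all small $s>0$ from analyticity and non-vanishing of the Gram determinant of the trace form (Dickson's criterion in characteristic zero), then rigidity of separable algebras (vanishing of $H^{2}$, hence open orbits) together with the finiteness of the number of semisimple isomorphism classes in dimension $n$, so that the connected curve $s\mapsto c(s)$ stays in the single open-and-closed orbit $O_{A}$. Both arguments rest on the same regularity input, which you rightly single out as the delicate point: the structure constants must converge on a full interval $[0,\delta)$; this does hold, since strong flatness forces convergence at a sequence $s_{k}\downarrow 0$ and Abel's lemma then gives convergence on $[0,s_{1})$, and the paper makes essentially the same (lightly argued) assumption when it passes to ${\bar{\mathbb C}}\{t\}$. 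What your route buys is brevity and reliance only on standard rigidity theory; what the paper's longer route buys is the intermediate machinery (the dimension counts $\dim_{\mathbb C}e_{j}(A)=\dim_{K}e_{j}(E)$ and the specialisation technique for spans of evaluated words), which is reused in Section 4 to prove Proposition \ref{ce} and Corollary \ref{cde}, so the detour is not wasted there.
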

Next we obtain some criteria for when a given algebra $N$ cannot be deformed flatly to a given semisimple algebra $A$ (Proposition \ref{ce}, Corollary \ref{cde}). We also apply these results to show that some contraction algebras cannot    be deformed to 
 some semisimple algebras thereby partially answering a question by Michael Wemyss \cite{Wemyssprivatecomm} (Example \ref{ef}).

  Let $N, A$ be two finite-dimensional $\mathbb C$-algebras of the same dimension $n$ with $A$ semisimple.  Notice that there are only finitely many non-isomorphic semisimple $\mathbb C$-algebras of dimension $n$. It follows that  if $\mathcal N$ is a flat deformation of $N$ which deforms to $A$ at $t=s$ then we have the following possibilities: (i) $\mathcal N$ is a strongly flat deformation from $N$ into $A'$, for some semisimple $\mathbb C$-algebra $A'$, where $A'$ is uniquely determined by $\mathcal N$,  
(ii) for all sufficiently small real numbers $s>0$ the algebra $\mathcal N$ specialised at $t=s$ is not semisimple, and hence it has a non-zero nilpotent ideal.

The following interesting remark about case (ii) by Joachim Jelisiejew was communicated after the first version of this paper was uploaded to arxiv.org, and he has agreed  it can be included here. 

{\em Remark (provided by Joachim Jelisiejew).} `` I believe that the case (ii) does not hold, as a deformation does not exist by an old result of Gabriel, [Corollary 2.5, page 142, \cite{Gabriel}].

Gabriel proves that the orbit of an algebra A is Zariski-open whenever $H^2(A, A) = 0$. In your setup, we work over complex numbers, hence semisimple algebras are automatically separable, so that the Hochschild cohomology $H^{>0}(A, -)$ vanishes for them, in particular $H^2(A, A)$ vanishes.
A deformation in the sense of your article yields a map from a small segment $[0, eps)$ to the space ${\mathrm Alg}_n$ of Gabriel. The image of this map touches the orbit of $A$, hence, by Zariski-openness, all but finitely many points of $[0, eps)$ lie in the orbit of $A$. In particular, sufficiently small $s > 0$ is mapped to the orbit of $A$, so that it yields a semisimple algebra. The same argument seems to allow one to deduce your Proposition $1$.''

 Note that our proof of Proposition  \ref{main} differs from the  proof mentioned by Joachim Jelisiejew. We present it so as to have an alternative proof of Proposition \ref{main} and  because it contains some  facts which are also useful for investigating Acons and are later used in paper \cite{Wemyss3} as supporting lemmas. 
 Interesting open questions on deformations of algebras appear in \cite{JJ, JJ3, JJ4, JJ5}.
  
 By combining the above suggestion from Joachim Jelisiejew  with our methods from the proof of Proposition \ref{main} we obtain the following theorem:
\begin{theorem}\label{main2} Let $N$, $A$, $B$ be finite-dimensional $\mathbb C$-algebras of the same dimension, and suppose that $A$ and $B$ are semisimple $\mathbb C$-algebras. Let $\mathcal N$ be a  formal deformation of $N$. Suppose that $\mathcal N$ is a flat deformation, in the sense of Definition \ref{flat}, 
from $N$ to  $A$. Suppose  that    $\mathcal N$ is also flat deformation, in the sense of Definition \ref{flat}, 
from  $N$ to $B$. Then $A$ and $B$ are isomorphic  $\mathbb C$-algebras. Moreover, $\mathcal N$ is a strongly flat deformation from $N$ to $A$.
\end{theorem}

 Interesting applications of deformation theory in noncommutative algebra, representation theory of algebras and Hopf-Algebras  were considered in \cite{IG, IG2, HV, Shepler}. 

\section{Background information}

By $\mathbb C$ we denote the field of complex numbers.  
We recall the definition of the formal deformation of $A$ from \cite{SWbook}.
 
\begin{defn}\label{SWbook} A formal deformation $(A_{t}, \circ, +)$ of a $\mathbb C$-algebra $(A, \cdot, +)$ is an associative ${\mathbb C}$-bilinear multiplication $\circ $ on the $\mathbb C\{t\}$-module $A\{t\}$ (where $A\{t\}$ is the set of power series with coefficients from $A$).

Recall that $A\{t\}$ has natural structure of $\mathbb C\{t\}$-module given by 
\[t^{j}\sum_{i=0}^{\infty }a_{i}t^{i}= \sum_{i=0}^{\infty }a_{i}t^{i+j}.\]
\end{defn}

 $(A_{t}, \circ, +)$ is a deformation of {\em polynomial type} of a $\mathbb C$-algebra $(A, \cdot, +)$ if for all $a,b\in A$, $a\circ b\in A[t]$, where $A[t]$ denotes the polynomial ring in variable $t$ and coefficients in $A$.  

$ $

In this paper we will only consider the case when $A$ is a finite-dimensional $\mathbb C$-algebra. Moreover, we only consider  deformations $A\{t\}$ of $A$ in which the image of the identity element in $A$ acts as the identity element in $A\{t\}$. All algebras considered in this paper are associative and usually noncommutative.

 Let $b_{1}, \ldots , b_{n}\in A$ be a basis of $A$ as a linear vector space over field $\mathbb C$. Note that every element of $A\{t\}$ can be written in the form
\[\sum_{i=1}^{n}b_{i}f_{i}(t),\]
 for some $f_{i}\in \mathbb C\{t\}$.
 
Then, we have a well defined multiplication on $A\{t\}$ given by
\[(\sum_{i=1}^{n}b_{i}f_{i}(t))\circ (\sum_{j=1}^{n}b_{j}f_{j}(t))=\sum_{i=1}^{n}\sum_{j=1}^{n}b_{i}\circ b_{j}f_{i}(t)f_{j}(t).\]

Notice that this multiplication is associative, since the multiplication $b_{i}\circ b_{j}$ is associative, so $(A\{t\}, \circ, +)$ is an associative algebra. We assume that $(A, \circ , +)$  has an identity element $1_{A}$, so we can identify elements of $\mathbb C\{t\}$ with elements of the subring  ${\mathbb C}\{t\}\cdot 1_{A}\subseteq A$, and we have $t^{i}\circ a=at^{i}=a\circ t^{i}$ for all $i$ and all $a\in A$. In particular $t$ is not a zero divisor in the algebra $(A\{t\}, \circ, +)$. 
 Similarly if $f(t)\in \mathbb C\{t\}$ then $f(t) $ (which we identify with $f(t)\cdot 1_{A}$) is not a zero divisor of $A\{t\}$.

  We recall a definition of a flat deformation following Feigin and Odesskii  \cite{FO} (we added an assumption on convergence of some power-series to their definition):
 \begin{definition}\label{flat}
 An algebra $A$ can be flatly deformed to an algebra $B$ if there is a formal deformation $A\{ t\}$ of $A$ such that:
\begin{enumerate}

\item For some $c\in \mathbb C$  the specialisation $A_{s}$ of $A\{t\}$  at $t=s$ is well defined for all $s\in (0,c)$ (so all power series $g_{i,j,l}(t)$ appearing in Definition \ref{76} are convergent for $t$ from the  interval $(0,c)$).  Moreover, 
 $A_{t_{0}}$ is isomorphic to $B$ for some $t_{0}\in (0, c)$.  
 Notice that $A_{0}$ is equal to $A$, since $A\{t\}$ is a formal deformation of $A$. 
 
\item The dimension of $A\{c\}$ for $0\leq c \leq t_{0}$ is the same as the dimension of $A$.

\end{enumerate}
\end{definition}

 We now recall a definition of a deformation used in algebraic geometry (provided by Joachim Jelisiejew):
\begin{definition}\label{Deformation} A deformation of a $\mathbb C$-algebra $N$ over ${\mathbb C}\{t\}$ is a ${\mathbb C}\{t\}$-algebra  $\mathcal{N}$ such that $ \mathcal{N}$ is a free ${\mathbb C}\{t\}$ module and  the algebra $\mathcal{N} / t \mathcal{N}$ is isomorphic with $ N.$

 A deformation of algebra $N$ to algebra $A$ over ${\mathbb C}\{t\}$  is a deformation over ${\mathbb C}\{t\}$ as above  and such that $ \mathcal{N} \otimes_{{\mathbb C}\{t\}} {\mathbb C}\{\{t\}\}$ is isomorphic with $ A\{\{t\}\}=A\otimes _{{\mathbb C}} {\mathbb C}\{\{t\}\}$,  where ${\mathbb C}\{\{t\}\}$ denotes the algebraic closure of ${\mathbb C}\{t\}$ (so ${\mathbb C}\{\{t\}\}$ is an algebraically closed field), and $\otimes $ denotes the tensor product  (over a central subalgebra so it is well defined).
\end{definition} 
   We introduce a definition of a strongly flat deformation. 

\begin{definition}\label{stronglyflat} Let $A$ and $B$ be finite-dimensional $\mathbb C$-algebras of the same dimension. 
 An algebra $A$ can be strongly flatly deformed to an algebra $B$ if there is a formal deformation $A\{ t\}$ of $A$ such that:
\begin{enumerate}

\item For some $c\in \mathbb R$  the specialisation $A_{s}$ of $A\{t\}$  at $t=s$ is well defined for all  $s\in [0,c)$ (so all power series $g_{i,j,l}(t)$ appearing in Definition \ref{76} are convergent for $t$ from the  interval $[0,c)$).  Moreover, 
 $A\{t_{0}\}$ is isomorphic to $B$ for some $t_{0}\in (0, c)$.

\item  Moreover, for every real number $\epsilon>0$
 there is $0<s<\epsilon $ such that $A_{s}$ is isomorphic to $B$.
\end{enumerate}
  
 Notice that $A\{0\}$ is equal to $A$, since $A\{t\}$ is a formal deformation of $A$.
\end{definition}
 Notice that if $B$ is a semisimple algebra then Definition \ref{stronglyflat} implies Definition \ref{flat}. So a strongly flat deformation from $A$ to $B$ with $B$ semisimple is also a flat deformation in the sense of the above Definition  \ref{flat}. 
 Notice also that Definition \ref{stronglyflat} implies Definition \ref{Deformation}.  
 So strongly flat deformation from $A$ to $B$ with $B$ semisimple is also a  deformation from $A$ to  $B$  in the sense of the above Definition \ref{Deformation}.

We will now recall some definitions from \cite{DDS}, as we will be using the same notation.

\begin{defn}
     Let $S$ be a subset of a $k$-algebra $A$ for a commutative unital ring $k$. Then $S$ is a \textbf{generating set} if all elements of $A$ can be written as a $k$-linear sum of products of elements of $S$ using the operations in $A$.
\end{defn}

\begin{notation}
    We will denote by $\mathbb C\langle x, y \rangle$ the polynomial ring over $\mathbb C$ in two non-commuting variables, $x$ and $y$.
\end{notation}

\begin{notation}
    We will denote by $\mathbb C\langle x, y \rangle \{t\}$ the power series ring in variable $t$ with coefficients from the polynomial ring in two non-commuting variables $x$ and $y$, with coefficients from $\mathbb C$. By $\mathbb C\langle x,y \rangle [t]$ we will  denote the polynomial ring in variable $t$ over the ring  $\mathbb C\langle x,y \rangle $.  By $\mathbb C\{t\}$ we denote the power series ring in variable $t$ and coefficients from $\mathbb C$. By ${\mathbb C}[t]$ we denote the polynomial ring in variable $t$. 
\end{notation}

\begin{notation}
   Let $R$ be a ring and $I$ be an ideal of $R$, then elements of the factor ring $R/I$ will be denoted as $r+I$, where $r\in R$.
 Notice that $r+I=s+I$ for $r,s\in R$ if and only if $r-s\in I$.
\end{notation}

\subsection{Polynomial identities}
 We start with a definition.

\begin{definition}\label{K} Let $s>0$ be a real number and let ${\bar {\mathbb C}}\{t\}$ denote the ring of power series from ${\mathbb C}\{t\}$ which are convergent at some interval containing zero.
 By    $K$ we denote the field of fractions of  ${\bar {\mathbb C}}\{t\}$. 
\end{definition}

 \begin{definition}\label{76} Let $N$ be a $\mathbb C$-algebra,
 let $d_{1}, \ldots , d_{n}$ be basis of $A$ as a linear space over $\mathbb C$. Let $\circ _{t}$ give the formal deformation of $N$ where 
\[d_{i}\circ _{t}d_{j}=\sum_{l=0}^{\infty }u_{l}(d_{i}, d_{j})t^{l}.\]

 We call the obtained ${\mathbb C}\{t\}$-algebra with basis $d_{1}, \ldots , d_{n}$ and multiplication $\circ _{t}$, $\mathcal N$. 
 We call the obtained ${\bar {\mathbb C}}\{t\}$-algebra with basis $d_{1}, \ldots , d_{n}$ and multiplication $\circ _{t}$, as ${\bar {\mathcal N}}$. 
By $E$ we denote the $K$-algebra with a basis (as a linear space) $d_{1}, \ldots , d_{n}$ and with the multiplication 
\[d_{i}\circ _{t}d_{j}=\sum_{l=0}^{\infty }u_{l}(d_{i}, d_{j})t^{l},\]

$ $

 Note that this extension  is well defined because ${\bar {\mathcal N}}$ is a ${\bar {\mathbb C}}\{t\}$-module.
Another way to see it is to embed ${\bar {\mathcal N}}$
 into an algebra of matrices with entries from ${\bar {\mathbb C}}\{t\}$ by the usual embedding when $x\rightarrow L_{x}$ where $L_{x}(y)=x\cdot y$ for $x,y\in {\bar {\mathcal N}}$, then this ring of  matrices is a natural subring of the ring of matrices with entries from $K$- the field of fractions of ${\bar {\mathbb C}}\{t\}$. 

$ $

 Note that \[d_{i}\circ _{t}d_{j}=\sum_{l=0}^{n }d_{l}g_{i,j,l}(t),\]
 for some $g_{i,j,l}(t)\in {\mathbb C}\{t\}$. Note that if $\mathcal N$ gives a flat deformation for $t\in [0,e)$ for some real number $e$ then $g_{i,j,l}(t)$ are convergent power series at $t\in [0,c)$ for some real number $0<c$ (and hence strongly convergent for $t\in [0,c)$).
\end{definition}

 We start with the following result:

\begin{prop}\label{semisimple} Let $N, A$ be two finite-dimensional $\mathbb C$-algebras, of the same dimension. 
 Let $E, \mathcal N, d_{1}, \ldots , d_{n}$ be as in Definition \ref{76}. 
Suppose that $\mathcal N$ gives a strongly flat deformation from $N$ to $A$. 
  Suppose that $A$ is a semisimple $\mathbb C$-algebra, then $E$ is a semisimple $K$-algebra of the same dimension.
\end{prop}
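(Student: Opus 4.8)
The plan is to characterise semisimplicity of a finite-dimensional algebra over a field of characteristic zero by the non-degeneracy of the trace form of its regular representation, and then to transfer this property from the semisimple specialisations $\mathcal N|_{t=s}\cong A$ to the generic fibre $E$ by observing that a convergent power series which is nonzero at a single real point is a nonzero element of $K$.

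First I would record the following standard fact (and include a short proof or a citation): if $B$ is a finite-dimensional algebra over a field $k$ with $\mathrm{char}\,k=0$ and $e_1,\dots,e_n$ is a $k$-basis of $B$, then $B$ is semisimple if and only if the Gram determinant $\det\bigl(\mathrm{tr}_k(L_{e_i}L_{e_j})\bigr)_{i,j}$ is nonzero, where $L_x$ denotes left multiplication by $x$ on $B$. (One inclusion: every $x\in J(B)$ acts nilpotently, so $\mathrm{tr}(L_{xa})=0$ for all $a$ and $J(B)$ lies in the radical of the form; the other inclusion: the radical of the trace form is a two-sided ideal all of whose elements $x$ satisfy $\mathrm{tr}(L_x^m)=0$ for all $m\ge 1$, hence $L_x$ is nilpotent in characteristic zero, so the radical is a nil, hence nilpotent, ideal and is contained in $J(B)$.) Since $L_{e_i}L_{e_j}=L_{e_ie_j}$ and, in our basis, $d_i\circ_t d_j=\sum_l d_l\,g_{i,j,l}(t)$, each entry $\mathrm{tr}(L_{d_i}L_{d_j})$ is a fixed polynomial expression in the structure constants; for $E$ these are the convergent power series $g_{k,l,m}(t)\in\bar{\mathbb C}\{t\}$ (convergence coming from the Remark after Definition \ref{76}), so the Gram matrix of $E$ in the basis $d_1,\dots,d_n$ has entries in $\bar{\mathbb C}\{t\}$ and its determinant is a convergent power series $D(t)\in\bar{\mathbb C}\{t\}$.

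Next I would specialise. For real $s$ in the interval of convergence, the $\mathbb C$-algebra $\mathcal N|_{t=s}$ has the same basis $d_1,\dots,d_n$ with structure constants $g_{k,l,m}(s)\in\mathbb C$, so its Gram determinant with respect to this basis is exactly $D(s)$. By strong flatness there is such an $s>0$ (in fact arbitrarily small ones) with $\mathcal N|_{t=s}\cong A$; since the trace form is an isomorphism invariant, $A$ is semisimple, and $\mathrm{char}\,\mathbb C=0$, the trace form of $\mathcal N|_{t=s}$ is non-degenerate, and since vanishing of a Gram determinant is independent of the chosen basis we get $D(s)\neq 0$ for that $s$. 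Hence $D$ is not the zero power series, and as $\bar{\mathbb C}\{t\}$ is an integral domain embedded in its fraction field $K$, $D(t)\neq 0$ in $K$. Thus the Gram determinant of $E$ in the basis $d_1,\dots,d_n$ is a nonzero element of $K$; since $K$ has characteristic zero and $E$ is $n$-dimensional over $K$ by construction, the criterion yields that $E$ is a semisimple $K$-algebra of dimension $n=\dim_{\mathbb C}A$.

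The main obstacle — essentially the only nontrivial ingredient — is the characteristic-zero trace-form criterion for semisimplicity in the possibly noncommutative setting, together with the realisation that genuinely analytic input is needed: working only over the discrete valuation ring $\bar{\mathbb C}\{t\}$ one cannot deduce anything about the generic fibre $E$ from the closed fibre $N$ (which need not be semisimple), so it is essential that the specialisation at some \emph{positive real} value $t=s$ is semisimple, which is precisely what forces $D(t)\neq 0$. Once this is in place the remaining steps are routine bookkeeping with structure constants, and the dimension count is immediate.
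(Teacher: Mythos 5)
Your proof is correct, but it takes a genuinely different route from the paper. The paper argues by contradiction with nilpotent ideals: if $E$ had a nonzero nilpotent ideal, it would contain a nonzero nilpotent element $\sum_i d_i h_i(t)$ with $h_i\in\bar{\mathbb C}\{t\}$, and specialising at a small real $s$ with $h_1(s)\neq 0$ and $\mathcal N_s\cong A$ would produce a nonzero nilpotent ideal in $A$; semisimplicity then follows since $E$ is finite-dimensional over a field with no nonzero nilpotent ideals. You instead package the whole question into the single convergent power series $D(t)=\det\bigl(\mathrm{tr}(L_{d_i}L_{d_j})\bigr)$ via the characteristic-zero trace-form criterion, and transfer semisimplicity in the \emph{easy} direction: $D(s)\neq 0$ at one real point forces $D\neq 0$ in $K$. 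This buys two things. First, your specialisation step is a triviality (evaluation is a ring homomorphism on $\bar{\mathbb C}\{t\}$), whereas the paper must justify that a nonzero convergent power series is nonzero at arbitrarily small $s>0$ (isolated zeros of analytic functions) and that the specialised element still generates a \emph{nilpotent} ideal in $N_s$ — the more delicate direction of transfer. Second, your argument makes explicit which basis-independent invariant is being tracked, so the isomorphism $\mathcal N_s\cong A$ enters cleanly. The cost is that you must import (or prove, as you sketch correctly) the Dieudonné--Dickson criterion, including the unitality needed for $\mathrm{tr}(L_x)=\mathrm{tr}(L_{x\cdot 1})=0$, which holds here by the paper's standing assumption that deformations preserve the identity. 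Both arguments rely on the same analytic input, namely that the structure constants $g_{i,j,l}(t)$ lie in $\bar{\mathbb C}\{t\}$ and that strong flatness supplies arbitrarily small real $s$ with $\mathcal N_s\cong A$, so your proof is a legitimate and arguably tighter substitute for the paper's.
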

\begin{proof} We will use a similar idea as in the classical papers in deformation theory. 
 Observe first that $E$ has no nonzero nilpotent ideals, as if $J$ is a nilpotent ideal in $E$ then there is 
 $0\neq d_{1}h_{1}(t)+\ldots +d_{n}h_{n}(t)$ which is a nonzero  nilpotent element, and 
the ideal generated by this element in $E$ is nilpotent, where $h_{1}(t), \ldots , h_{n}(t)\in {\bar C}\{t\}$. 
 Let $s>0$ be such that at $t=s$, $\mathcal N$ deforms to $A$ (which means 
that  $\mathcal N_{s}$ is isomorphic to $A$) and that $h_{1}(t), \ldots , h_{n}(t)$ are absolutely
 convergent for $t\in [0,2s)$. We can find such $s$ because we can take arbitrarily 
small $s>0$ (since $\mathcal N$ is {\em strongly} flat). Because we can take arbitrarily small $s$  we can assume that  
  $h_{1}(s)\neq 0$. Notice that $h_{i}$ is convergent because of the definition of $E$, as $E$ only
 involves power series from $K$. Moreover,
 $0\neq d_{1}h_{1}(t)+\ldots +d_{n}h_{n}(t)$ generates a nilpotent ideal in
 the ring of power series which are absolutely convergent at the interval $[0,2s)$.  
 It follows that $d_{1}h_{1}(s)+\ldots +d_{n}h_{n}(s)$ generates nonzero 
nilpotent ideal in the specialisation of $\mathcal N$ at $t=s$. We have 
obtained a contradiction, since this specialisation at $t=s$ is isomorphic to $A$ and $A$ is a semisimple $\mathbb C$-algebra, so it has no nilpotent ideals. 

Therefore $E$ has no nonzero nilpotent ideals, and since $E$ is a finite-dimensonal algebra over a field 
  it follows that $E$ is semisimple.
\end{proof}

\begin{definition}\label{12345}
Let $R$ be a finite-dimensional  semisimple algebra over a field $K.$ 

Then, by the Artin-Wedderburn theorem $R$ is isomorphic (for some $n'$)  to
\[\oplus _{i=1}^{n'} M_{n_{i}}(D_{i}(R))  ,\]
 where $D_{i}(R)$ is a finite-dimensional skew-field extension of $K$.  
 By $e_{j}(R)$ we denote \[\oplus _{\{i=1, \ldots, n': n_{i}=j\}}M_{n_{i}}(D_{i}(R)).\]

\end{definition}

 \begin{remark}\label{commutative}
Let $N, A$ be two finite-dimensional $\mathbb C$-algebras, of the same dimension. 
 Let $E, \mathcal N, D_{i}$ be as in Definition \ref{76} and Proposition \ref{semisimple}.   Suppose that $\mathcal N$ gives a strongly flat deformation from $N$ to $A$.  
  Then for every $j$  the division ring $D_{j}(E)$ is commutative.
\end{remark}
\begin{proof} 
 It follows from the known fact that all finite-dimensional division algebras over $K$ are commutative, where $K$ is as in Definition \ref{K}. 

{\em Proof $1$.}:
{\em Part $1$.} {For this part we quote  Uzi Vishne who kindly advised me with the proof.}: ``The key notions here are Tsen's theorem and $C_{1}$ fields (also called  ``quasi-algebraic''). See \cite{Serre}, Section II.3 (subsection II. 3.2 is about $C_{1}$).  
  The property that all finite-dimensional skew fields over $F$ are commutative is the same as $Br(F')=1$ for all finite-dimensional field extensions of $F$. This is exactly the statement that $F$ has cohomological dimension not exceeding one (denoted $cd(F)\leq 1$), also defined in \cite{Serre}. In 1933 Tsen proved that the field of rational functions ${\mathbb C}(t)$ is $C_{1}$ (where $\mathbb C$ is the field of complex numbers), which implies $cd({\mathbb C}(t))\leq 1$.  The fact that ${ \mathbb C}((t))$ is $C_{1}$  also follows (see \cite{GS}, Theorem $6.2.1$.).''
 
{\em Part $2$.} Note also that in \cite{Lang}, Theorem $12$ states ``The following fields and their algebraic  extensions are $C_{1}$: The convergent power series over an algebraically closed valuated constant field.''
 Therefore $K$ is $C_{1}$, hence all finite-dimensional division algebras over $K$ are commutative.

 {\em  Proof $2$.} An  alternative proof that $K$ is $C^{1}$  using Henselian rings:
 
 Recall  Definition  $2.18$ and an example included just after in this definition from \cite{reference27}:``  A discrete valuation ring is a Noetherian local domain whose maximal ideal is
 principal and different from $(0)$.
  The ring of algebraic power series $k\langle x\rangle $ or convergent power series $k\{x\}$ in one variable $x$ are
 Henselian discrete valuation rings.''

 The result that $K$ is $C_{1}$ also follows from \cite{reference2}, page 566,  Note 1: ``The same type of argument yields a short proof of Lang's theorem that
 if $R$ is a Henselian discrete valuation ring with algebraically closed residue field, such
 that $K^{*}$  is separable over $K$, then $R$ is $C^1$.''
\end{proof}

 We will now prove the following result: 

\begin{prop}\label{polynomialidentity} Let $N, A$ be two finite-dimensional $\mathbb C$-algebras, of the same dimension. 
 Let $E, \mathcal N, D_{i}$ be as in Definition \ref{76} and Proposition \ref{semisimple}.   Suppose that $\mathcal N$ gives a strongly flat deformation from $N$ to $A$.   
  Then for every $j$ we have \[dim_{\mathbb C}e_{j}(A)=dim _{K}e_{j}(E).\]
\end{prop}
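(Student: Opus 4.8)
The plan is to read the numbers $\dim_{\mathbb C}e_j(A)$ and $\dim_K e_j(E)$ off a single algebro‑geometric object — the characteristic polynomial of a \emph{generic left‑multiplication operator} — and then to argue that the relevant feature of that polynomial (the multiplicities of its roots) is the same in the generic fibre $E$ as in the special fibre $\mathcal N_s\cong A$ for all sufficiently small $s$.

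\textbf{Setting up the generic operator.} Adjoin indeterminates $\xi_1,\dots,\xi_n$, put $X=\sum_{l}d_l\xi_l$, and let $M(\xi,t)$ be the $n\times n$ matrix, in the basis $d_1,\dots,d_n$, of the operator $y\mapsto X\circ_t y$. By the formula for $d_i\circ_t d_j$, the entries of $M(\xi,t)$ are $\xi$‑linear combinations of the $g_{i,j,l}(t)$, so $M(\xi,t)$ has entries in $\mathcal O[\xi]$, where $\mathcal O\subseteq\bar{\mathbb C}\{t\}$ denotes the ring of power series convergent on $[0,e)$ (with $e$ as noted after Definition~\ref{76}). Set $\chi(T;\xi,t)=\det\!\big(T\cdot\mathrm{Id}-M(\xi,t)\big)\in\mathcal O[\xi][T]$, monic of degree $n$ in $T$. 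Two observations: (i) for $0<s<e$, specialising $t=s$ yields the characteristic polynomial of the generic left‑multiplication operator of the algebra obtained from $\mathcal N$ at $t=s$ (same basis $d_\bullet$, structure constants $g_{i,j,l}(s)$); (ii) over $K$ (equivalently over $K(\xi)$), $\chi(T;\xi,t)$ is the characteristic polynomial of the generic left‑multiplication operator of $E$.

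\textbf{Reading $\dim e_j$ off the multiplicities.} The algebraic input is the following. Let $R=\bigoplus_i M_{k_i}(\Delta_i)$ be a finite‑dimensional semisimple algebra over a field $F$, with every $\Delta_i$ a \emph{commutative} finite field extension of $F$ (this is guaranteed for $E$ by Remark~\ref{commutative}, and trivially holds for $A$ since $\mathbb C$ is algebraically closed). Then the characteristic polynomial of the generic left‑multiplication operator of $R$ equals, over $\bar F$, the product $\prod_i\prod_{a}\sigma_{i,a}(\chi_{X_i})(T)^{k_i}$, where $\sigma_{i,a}$ runs over the $F$‑embeddings $\Delta_i\hookrightarrow\bar F$ and $\chi_{X_i}\in\Delta_i[T]$ is the genuine characteristic polynomial (well defined because $\Delta_i$ is commutative) of the generic element $X_i$ of $M_{k_i}(\Delta_i)$; moreover, for a generic choice of $X$ the resulting $\sum_i k_i[\Delta_i:F]$ roots are pairwise distinct, so the multiset of root‑multiplicities is the one in which each value $k_i$ occurs $k_i[\Delta_i:F]$ times. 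Hence $\dim_F e_j(R)=j\cdot\#\{\text{roots of multiplicity exactly }j\}$. Applying this over $K$ gives $\dim_K e_j(E)=j\cdot m_j$, where $m_j$ is the number of roots of $\chi(T;\xi,t)$ over $\overline{K(\xi)}$ of multiplicity $j$; applying it to the algebra at $t=s$, whenever that algebra is isomorphic to $A$, gives $\dim_{\mathbb C}e_j(A)=j\cdot m_j(s)$, where $m_j(s)$ is the number of roots of $\chi(T;\xi,s)$ over $\overline{\mathbb C(\xi)}$ of multiplicity $j$. The one delicate point here is the no‑accidental‑coincidence claim among the roots for generic $X$; I would settle it by exhibiting, after base change to $\bar F$, a single element diagonal in each simple component with all $\sum_i k_i[\Delta_i:F]$ eigenvalues distinct, and then using that distinctness of roots of a monic polynomial is an open condition — and I note that it is precisely the commutativity of the $\Delta_i$ that prevents the conjugate blocks from merging (which would change the count).

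\textbf{Comparing generic and special multiplicity profiles.} The multiplicity profile of a monic polynomial is determined by the degrees of the iterated gcd's $\gcd(\chi,\partial_T\chi,\dots,\partial_T^{\,k}\chi)$, and these degrees are controlled by the vanishing of finitely many subresultant coefficients $\mathrm{sr}_k\in\mathcal O[\xi]$. Any such $\mathrm{sr}_k$ that is nonzero generically is a nonzero element of $\mathcal O[\xi]$, hence has a coefficient $c_\alpha(t)\in\mathcal O$ that is not identically zero; since $c_\alpha$ is analytic on a disc of radius $\ge e$ about $0$, its zeros in $[0,e)$ are isolated and in particular do not accumulate at $0$. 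Thus there is $\delta>0$ such that for every $s$ with $0<s<\delta$ none of the finitely many relevant subresultants vanishes at $t=s$, and therefore $m_j(s)=m_j$ for all $j$. Finally, because $\mathcal N$ is \emph{strongly} flat, the set of $s>0$ for which the deformation at $t=s$ is isomorphic to $A$ accumulates at $0$, so we may pick such an $s_0\in(0,\delta)$; then $\dim_{\mathbb C}e_j(A)=j\cdot m_j(s_0)=j\cdot m_j=\dim_K e_j(E)$ for every $j$, which is the assertion. I expect the main obstacle to be the genericity statement in the algebraic input (absence of accidental root coincidences); the rest is bookkeeping with subresultants and elementary facts about zeros of analytic functions.
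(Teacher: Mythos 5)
Your proof is essentially correct, but it takes a genuinely different route from the paper's. The paper detects the blocks $e_j$ through the standard polynomial identity $s_{2m}$: it shows that the ideal $I_j'$ generated by evaluations of $s_{2j}$ on the spanning set equals $\oplus_{i>j}e_i$ in both $A$ and $E$, and then proves the two inequalities $\dim_{\mathbb C}I_j'(A)\geq\dim_K I_j'(E)$ and $\dim_{\mathbb C}I_j'(A)\leq\dim_K I_j'(E)$ by separate specialisation arguments (clearing denominators by a nonzero $h(t)\in\bar{\mathbb C}\{t\}$ and evaluating at small $s$ where $h(s)\neq 0$). You instead package all the $\dim e_j$ into a single invariant, the root-multiplicity profile of the characteristic polynomial of a generic left-multiplication operator, and transport it from the generic fibre to the special fibre by semicontinuity (subresultants plus the fact that zeros of a nonzero convergent power series do not accumulate at $0$). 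The underlying specialisation principle is the same in both arguments, but yours buys a one-shot equality rather than two inequalities, at the price of a nontrivial algebraic lemma: the factorisation $\chi=\prod_i\prod_\sigma\sigma(\chi_{X_i})^{k_i}$ of the regular-representation characteristic polynomial and the genericity claim that its $\sum_i k_i[\Delta_i:F]$ roots are pairwise distinct. Your sketch of that lemma (exhibit one element with distinct eigenvalues after base change to $\bar F$, then use openness of separability of the reduced polynomial) is sound; note that, exactly as in the paper's route, it relies on Remark \ref{commutative} that the $D_i(E)$ are commutative — without that, both the multiplicity count and the paper's use of the Amitsur--Levitzki identity for $M_{n_i}(D_i)$ would fail. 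The only step I would urge you to write out fully is that genericity lemma, since the identity $\dim_F e_j(R)=j\cdot\#\{\text{roots of multiplicity exactly }j\}$ is false without it; the subresultant bookkeeping and the analytic non-accumulation argument are routine.
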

\begin{proof} Let $d_{1}, \ldots , d_{n}$ be a basis of $A$, then because $\mathcal N$ is a formal deformation of $N$, then $d_{1}, \ldots , d_{n}$ span $E$ as $K$-algebra, and they also span $N_{s}$ ($\mathcal N$ specialised at $t=s$).

 {\em Part 1.} We will first show that  
\[dim_{\mathbb C}e_{j}(A)\geq dim _{K}e_{j}(E).\]

It is known that the polynomial identity for $m\times m$ matrix algebra $R$ over a field $K$ (of characteristic zero) is:
\[s_{2m}(x_{1}, \ldots , x_{2m})=  \sum_{\sigma}(-1)^{\sigma} x_{\sigma (1)}\cdot _{R}  \cdots \cdot _{R} x_{\sigma (2m)}, \]
  where $x_{1}, \ldots , x_{2m}$ are elements of the given algebra $R$, and $\cdot _{R}$ is the multiplication in algebra $R$.
It is known that all matrices of dimension at most $m$ over a field satisfy this identity, but matrices of dimension $m+1$ and larger dimension don't satisfy it \cite{Rowen}. 

$ $

We then have  in the algebra $A$:
\[s_{2m}(x_{1}, \ldots , x_{2m})=  \sum_{\sigma}(-1)^{\sigma}x_{\sigma (1)} \cdots \cdot  x_{\sigma (2m)}, \]
 where $\cdot $ is the multiplication in $A$. Let $I_{m}\subseteq A$ denote set of evaluations   
$s_{2m}(x_{1}, \ldots , x_{2m})$ with $x_{1}, \ldots , x_{2m}\in \{d_{1}, \ldots , d_{n}\}$.

 Let $I_{m}'(A)$ be the ideal generated in the algebra $A$ by elements 
$s_{2m}(x_{1}, \ldots , x_{2m})$ for all possible choices of elements $x_{1}, \ldots , x_{2m}\in \{d_{1}, \ldots , d_{n}\}$.

$ $

Similarly, in $E$ we have 
\[s_{2m}(x_{1}, \ldots , x_{2m})=  \sum_{\sigma}(-1)^{\sigma} x_{\sigma (1)}\circ_{t} \ldots \circ _{t} x_{\sigma (2m)}, \]
  where $x_{1}, \ldots , x_{2m}$ are elements of the given algebra $E$, and $\circ _{t}$ is the multiplication in algebra $E$.

 Let $I_{m}(E)\subseteq E$ consist of  elements 
$s_{2m}(x_{1}, \ldots , x_{2m})$ for all possible choices of elements $x_{1}, \ldots , x_{2m}\in \{d_{1}, \ldots , d_{n}\}$.
Let $I_{m}'(E)$ be the ideal in $E$ generated by elements from $I_{m}(E)$.

Fix a natural number $j$. 
 Suppose that $dim_{K} E-dim_{K} I_{j}'(E) = \beta $.

As the enumeration of elements $d_{i}$ does not matter, we can assume that 
 \[E=span _{K}\sum_{i=1}^{\beta }d_{i}K+I_{j}'(E).\]
Therefore, for some $ g_{1}(t), \ldots , g_{\beta }(t)\in K$ we have 
\[d_{\beta +1}\in \sum_{i=1}^{\beta } g_{i}(t)d_{i}+I_{j}'(E).\]

Therefore there are $h(t), h_{1}(t), \ldots , h_{\beta }(t)\in {\bar {\mathbb C}}\{t\}$
 with $h(t)\neq 0$ and $c(t)\in   {\bar {\mathbb C}}\{t\}I_{j}'(E)$ 
 such that 
\[h(t)d_{\beta +1}= \sum_{i=1}^{\beta } h_{i}(t)d_{i}+c(t).\]

 We can assume that $c({t})$ is a linear combination over  ${\bar {\mathbb C}}\{t\}$ of elements from the set $\{d_{1}, \ldots , d_{n}\}$ (because if needed we can adjust $h(t)$).
 So we have \[c(t)=h(t)d_{\beta +1}-\sum_{i=1}^{\beta } h_{i}(t)d_{i}.\]

This is  relation which holds in the ${\bar {\mathbb C}}\{t\}$-algebra $\bar {\mathcal N}$. 
 Recall that $\bar {\mathcal N}$ specialised at $t=s$ gives multiplication in $N_{s}$-the algebra spanned by $d_{1}, \ldots , d_{n}$.

Let $0<s'<s$.  By specialising $c(t)$ in $t=s'$ we get relation in $N_{s'}$. 
\[c(s')=h(s')d_{\beta +1}-\sum_{i=1}^{\beta } h_{i}(s')d_{i},\]
 where $c(s')$ is obtained from $c(t)$ by specialising at each place
 the multiplication for $t=s'$, for example if $c(t)=(d_{1}\circ _{t}d_{2}-d_{2}\circ _{t}d_{1})\circ _{t}d_{3}$, 
then $c(s')=(d_{1}\circ _{s'}d_{2}-d_{2}\circ _{s'}d_{1})\circ _{s'}d_{3}$,
 Note, that $c(s')\in I_{j}'(A)$, by the construction. 

$ $

Because $h_{i}(t), h(t)$ are absolutely convergent for $0< t<s$  
 there is $e>0$ such that for  all $0<s'<e$ we have  $h(s')\neq 0$. By the assumptions we can find $s\in (0, e)$ such that  $\mathcal N$ specialised at $t=s$ gives an algebra $N_{s}$  isomorphic to $A$, therefore,
 
\[d_{\beta +1}\in \sum_{i=1}^{\beta }{ \mathbb C}d_{i}+I_{j}'( N_{s})\]

We can apply the same argument and obtain that for $i>0$ we have 
\[d_{\beta +i}\in \sum_{i=1}^{\beta }{ \mathbb C}d_{i}+I_{j}'( N_{s})\]

Therefore $dim_{\mathbb C}I_{j}'(A)\geq n-\beta $. Since 
 by assumption $dim_{K}I_{j}'(E)=n-\beta $ we obtain 
$dim_{\mathbb C}I_{j}'(N_{s})\geq dim_{K}I_{j}'(E)$ for every $j$.
 Because $N_{s}$ is isomorphic to $A$ we obtain $dim _{\mathbb C}(A)=dim_{\mathbb C}I_{j}'(N_{s})\geq dim_{K}I_{j}'(E)$ for every $j$. 
 Observe that matrix rings over 
 fields are simple algebras over these fields, therefore, \[I_{j}'(E)=\oplus_{i>j}e_{i}(E), I_{j}'(A)=\oplus_{i>j}e_{i}(A).\]

{\em Part 2.} We will now show that $dim_{\mathbb C}I_{j}'(A)\leq dim_{K}I_{j}'(E)$ for every $j$.  
Fix number $j$. For this fixed $j$ denote by
$D_{k}(E)$ be the set of all products of at most  $k$ elements from the set $d_{1}, \ldots , d_{n}$ and an  element from the set $I_{j}(A)$ (in any order).
 Then since $I_{j}'(E)$ is finite-dimensional as a vector space over $K$, we get that for some $f$,
\[D_{f+1}(E)\subseteq  span _{K}D_{f}(E).\]

Therefore, there is  a nonzero polynomial $h(t)\in {\bar {\mathbb C}}\{t\}$ such that 
\[h(t)D_{f+1}(E)\subseteq  span _{K}D_{f}(E).\]

{\em Observation $1$.}  Again, for sufficiently small real numbers $s>0$, we have $h(s)\neq 0$ and all power series from ${\bar {\mathbb C}}\{t\}$ are absolutely convergent at $t=s$.

{\em Observation $2$.} Note that by substituting $t=s$ in products which for elements of $D_{f}(E)$ we are replacing multiplication $\circ _{t}$
 by multiplication $\circ _{s}$ so we obtain elements from $D_{f}(A)$, for any natural number $f$.  
 Moreover, by assumptions, we can choose $s$ which is sufficiently small and $N_{s}$ is isomorphic to $A$, where $N_{s}$ is the specialisation  of $\mathcal N$ at $t=s$. 
By combining observations $1$ and $2$ we obtain:

\[h(s)D_{f+1}(N_{s})\subseteq  span _{{\mathbb C}}D_{f}(N_{s}).\]
 and since $h(s)\neq 0$ we obtain for an appropriate $s$:
\[D_{f+1}(N_{s})\subseteq  span _{{\mathbb C}}D_{f}(N_{s}).\]
 Notice, that by the construction of $D_{f+2}(N_{s})$ we then have 
$D_{f+2}(N_{s})\subseteq span _{\mathbb C}\sum_{i=1}^{n}d_{i}\cdot D_{f+1}(N_{s})+ D_{f+1}(N_{s})\cdot d_{i}\subseteq 
 span _{\mathbb C}\sum_{i=1}^{n}d_{i}\cdot D_{f}(N_{s})+ D_{f}(N_{s})\cdot d_{i}\subseteq {\mathbb C}D_{f+1}(N_{s})\subseteq 
{\mathbb C}D_{f+1}(N_{s})$ (where $\cdot $ denotes the multiplication in $N_{s}$).
 Note that the last inclusion follow from the fact that $D_{f+1}(N_{s})\subseteq  span _{\mathbb C}D_{f}(N_{s})$.

Continuing in this way we obtain for every natural number $i$:
\[D_{f+i}(N_{s})\subseteq span _{\mathbb C} D_{f}(N_{s}).\]

Therefore, $I'_{j}(N_{s})=span _{\mathbb C} D_{f}(N_{s})$.

Analogous argument implies that $I'_{j}(E)=span _{ K} D_{f}(E)$. 

We need to show that $span _{ \mathbb C} D_{f}(A)\leq span _{\mathbb K} D_{f}(E).$
 Because we can take $s$ such that $N_{s}$ is isomorphic to $A$, it suffices to show that 
$span _{ \mathbb C} D_{f}(N_{s})\leq span _{\mathbb K} D_{f}(E).$

Let $span _{\mathbb K} D_{f}(E)$ have dimension $\alpha $ as a vector space over field $K$.
Then there are elements $c_{1}, \ldots , c_{\alpha }\in D_{f}(E)$ such that if $c(t)\in D_{f}(E)$ then 
$c(t)\in Kc_{1}+\ldots +Kc_{\alpha}$.
 Therefore, for each $c(t)\in D_{f}(E)$ there is a nonzero $h(t)\in {\bar {\mathbb C}}\{t\}$ and such that 
\[h(t)c(t)\in {\bar {\mathbb C}}\{t\}c_{1}+\ldots {\bar {\mathbb C}}\{t\}c_{\alpha }.\]
 Because we have finite number of elements in $D_{f}(E)$ then we can have a common $h(t)$  for all  $c(t)\in D_{f}(E)$.
 
Then for sufficiently small $s'>0$ we have $h(s')\neq 0$. 
By specialising at $t=s'$ elements from $D_{f}(E)$ we can obtain any element from $D_{f}(N_{{s'}})$ (by the definition of these sets), as
 we are just replacing multiplication $\circ _{t}$ by the multiplication $\circ _{s'}$ in products.
Recall that $s'$ is such that $h(s')\neq 0$. Therefore,
\[h(t)c(t)\in Kc_{1}'+\ldots Kc_{\alpha }'.\]
 where $c_{1}', \ldots , c_{\alpha }'$ are elements in $D_{f}(N_{s'})$ obtained by specialising elements $c_{1}, \ldots , c_{\alpha}$ at $t=s'$. 
Therefore, the dimension (as a vector space over ${\mathbb C}$) of $D_{f}(N_{s'})$ is at most $\alpha $, so 
$dim_{\mathbb C}D_{f}(N_{s'})\leq \alpha = dim_{\mathbb K}D_{f}(E)$, as required, so
 $dim_{\mathbb C}I_{j}(N_{s'})\leq \alpha = dim_{\mathbb K}I_{j}(E)$. 
 Combining this result with the last two lines of the part $1$ of this proof we obtain that $e_{j}(N_{s'})=e_{j}(E)$ for each $j$. By assumptions we can take $s'$ uch that $N_{s'}$ is isomorphic to $A$, therefore  $e_{j}(A)=e_{j}(E)$ for each $j$. 
 This concludes the proof.
\end{proof} 
 Notice that we can obtain another proof of Proposition \ref{polynomialidentity} by using the following proof of Proposition \ref{main}.  Methods used in the proof of Proposition \ref{polynomialidentity} can be used in a different context, for example in  Section \ref{applications}, so we included them. 
\section{Proposition \ref{main}}

In this section we will prove Proposition \ref{main}. 
 We first introduce some notation. 
\begin{definition}\label{g(x)} 
 Now observe that by the previous subsection 
  $E$ is semisimple since $A$ is semisimple.
 Therefore, $E$ as a $K$-algebra  is isomorphic  to some $K$-algebra
\[ M=\oplus _{i=1}^{n'} M_{n_{i}}(D_{i})  ,\]
 where $D_{i}$ is a finite-dimensional skew-field extension of $K$. 

 By Remark \ref{commutative}  $D_{i}$ are commutative, hence we can find polynomials $g_{i}(x)$ with entries in $K$ and elements $u_{i}\in D_{i}$ such that $D_{i}=K[u_{i}]$, and $g_{i}(u_{i})=0$. Notice that $g_{i}(x)$ is the minimal polynomial for $u_{i}$. 

 Moreover we can assume that entries of $g_{i}(x)$ are from ${\bar C}\{t\}$, and that the entry at the largest power of $x$ is $1$ (as if it is not the case then instead of $u_{i}$ we can take element $u_{i}h(t)^{-1}$ for some $h(t)\in {\bar C}\{t\}$).
  Since $D_{i}$ is a division ring, it follows that $g_{i}(x)\in K[x]$ is an irreducible polynomial (and hence has no multiple roots, since $K$ has characteristic zero).  

Let $s>0$ be a small  real number, 
 by $g_{i,s}(x)$ we denote the polynomial obtained by specialising entries of $g_{i,s}(x)$ at $t=s$.  
 Then,  $g_{i,s}(x)$ is the minimal polynomial for $v_{i}$ where $v_{i}\in {\mathbb C}[y]/ \langle g_{i,s}(y)\rangle$ where $<g_{i,s}(y)>$ is the ideal generated by the polynomial $g_{i,s}(y)$ in the polynomial ring  ${\mathbb C}[y]$ and where $v_{i}= y+\langle g_{i,s}(y)\rangle$.   
 Notice that for sufficiently small $s$, $\deg g_{i}(x)=\deg g_{i,s}(x)$.  Moreover, we assume $v_{i}v_{j}=0$ for $i\neq j$ (as they may appear as parts of different simple components of a semisimple algebra, and then it is convenient to define their multiplication and define it  as zero, as the multiplication of elements form different simple components in a semisimple ring are zero). 
\end{definition}
\begin{definition}\label {sigma}
 Let the notation be as in Definition \ref{g(x)}, so $u_{i}, v_{i}$ are as in Definition \ref{g(x)}. Let ${\mathbb C}_{s}\{t\}$ be the subring of the power series ring ${\mathbb C}\{t\}$ consisting of power series which are convergent for $t\in [0, s+e)$ for some real number $e>0$. 
  
 The homomorphism of  $\mathbb C$-algebras  $\sigma :\oplus _{i=1}^{n'}{\mathbb C}_{s}\{t\}[u_{i}]\rightarrow \oplus _{i=1}^{n'}{\mathbb C}[v_{i}]$ is defined by: $\sigma (u_{i}^{j}) =v_{i}^{j}$ 
 and extended to the homomorphism of $\mathbb C$-algebras (note that it is well defined as $\sigma (g_{i}(x))=g_{i,s}(x)$ where $g_{i,s}(v_{i})=0$ and $ g_{i}(u_{i})=0$).

\end{definition}

\begin{notation}\label{m1} 
 Let $N, A$ be finite-dimensional $\mathbb C$-algebras, of the same dimension $n$, with $A$ semisimple. Let $\mathcal N$ be a strongly flat deformation from $N$ to $A$. Let notation be as in Definition \ref{76}. 
 Let $s>0$ be a real number.

  Let \[d_{1}, d_{2}, \ldots , d_{n}\] span the algebra $N$ as a vector space. 
 Let \[d_{1}', d_{2}', \ldots , d_{n}'\] be the corresponding elements of ${\bar {\mathcal N}}$ (and of $\mathcal N$ as  ${\bar {\mathcal N}}\subseteq \bar {\mathcal N}$).
  Let \[d_{1}'', d_{2}'', \ldots , d_{n}''\] be the corresponding elements of $E$,
  and let \[d_{1,s}, \ldots , d_{n, s}\] be the corresponding elements of $N_{s}$ (the specialisation of $\mathcal N$ at $t=s$).
\end{notation}
 We start with the following remark
\begin{remark}\label{xy}
 Let notation be as in Notation \ref{m1}. 
  Consider the following relations which hold in ${\bar {\mathcal N}}$: 
\[d_{j}'\circ _{t} d_{j}'=\sum_{i=1}^{n}g_{i,j,k}(d_{j}',d_{k}')t^{i}\] 
where $u_{i,j,k}(d_{j}', d_{k}')\in \mathbb C$.
 Then these relations form the Groebner basis of the ${\mathbb C}\{t\}$-algebra $\mathcal N$, so by  applying these relations we get  \[(d_{i}'\circ d_{j}')\circ d_{k}'=\sum_{i=1}^{n}\alpha _{i,j,k,l}(t)d_{l}'\] and 
\[d_{i}'\circ (d_{j}'\circ d_{k}')=\sum_{i=1}^{n}\beta _{i,j,k,l}(t)d_{l}'\]  with $\alpha _{i,j,k}(t)=\beta _{i,j,k}(t)$, where $\alpha _{i,j,k,l}(t), \beta _{i,j,k,l}(t)\in {\bar {\mathbb C}}\{t\}$.  

Indeed, if the above relations didn't form Groebner bases, then 
$(d_{i}'\circ d_{j}')\circ d_{k}'-d_{i}'\circ (d_{j}'\circ d_{k}')$ would be a nontrivial linear combination of elements $d_{l}'$, which would force elements $d_{l}'$ be linearly dependent over ${\bar {\mathbb C}}\{t\}$, a contradiction
 (since by the construction of the formal deformation $\mathcal N$ it is spanned by elements $d_{1}', \ldots , d_{n}'$).

 
 Observe also that the  specialisation of the above relations  at $t=s$  gives a Groebner basis for  $N_{s}$.
 Indeed, specialising at $t=s$ we get  relations \[d_{j,s}\circ _{s} d_{k,s}=\sum_{i=1}^{n}u_{i,j,k}(d_{k,s},d_{j,s})s^{i}.\]
  Then,  $\alpha _{i,j,k}(t)-\beta _{i,j,k}(t)=0$ implies that $\alpha _{i,j,k}(s)-\beta _{i,j,k}(s)=0$  so the above relations form a Groebner basis for $N_{s}$.  
\end{remark}

{\bf Proof of Proposition  \ref{main}.}  Let notation be as in Notation \ref{main}. 
 In the algebra $\bar {\mathcal N}$ we have relations  
\[d_{j}'\circ_{t} d_{k}'=\sum_{i=1}^{n}\alpha _{i,j,k}(t)d_{i}',\] for some $\alpha _{i,j,k}\in {\bar { \mathbb C}}\{t\}$ (where $\circ _{t}$ is the multiplication in $\mathcal N$ and in ${\bar {\mathcal N}}$). 
 Notice that the  power series $\alpha _{i,j,k}(t)$ are convergent for sufficiently small $s$, because the deformation $\mathcal N$ is strongly flat. 
 Consequently, the corresponding relations also hold in $E$. 
 So we have: 
\[d_{j}''\circ _{t} d_{k}''=\sum_{i=1}^{n}\alpha _{i,j,k}(t)d_{i}'',\]
 where $\circ _{t}$ denotes the multiplication in $E$.
 Each $d_{i}''$ is an element of $E$, and hence can be written as $h(t)^{-1}{\bar d}_{i}$ where $h(t)\in {\bar {\mathbb C}}_{s}\{t\}$ and 
  entries of ${\bar d}_{i}\in E$ are from \[\{{\bar {\mathbb C}_{s}}\{t\}u_{j}^{l}: j,l\geq 0\}\] and entries of polynomials $g_{i}(x)$ are from ${\bar {\mathbb C}}\{t\}_{s}u_{j}^{t}$ (it holds for all sufficiently small $s$, see Definition \ref{sigma}).

Let $s>0$ be a real number such that all entries of  matrices $d_{1}'', d_{2}'', \ldots , d_{n}''$ from $E$ are well defined and $\alpha _{i,j,k}$ are convergent at $(0,s+\epsilon)$ for some real number $\epsilon >0$. 
Let \[d_{j}'''\in \oplus _{i=1}^{n'}M_{n_{i}}({\mathbb C}[v_{i}])\] be obtained by applying  homomorphism $\sigma $ from Definition \ref{sigma} to each entry of  matrices $d_{i}''$.
Let $N_{s}'$ be the $\mathbb C$-algebra spanned by  matrices $d_{1}''', d_{2}''', \ldots , d_{n}'''$. 

Then they satisfy the relations:
\[d_{j}'''\circ  d_{k}'''=\sum_{i=1}^{n}\alpha _{i,j,k}(s)d_{i}''',\]
 since the relations $d_{j}'\circ _{t} d_{k}'=\sum_{i=1}^{n}\alpha _{i,j,k}(t)d_{i}'$ hold in  $\mathcal N$ and $\sigma (\alpha _{i,j,k}(t))=\alpha _{i,j,k}(s)$, and $\sigma $ is a homomorphism of $\mathbb C$-algebras.  

Observe on the other hand that since $\mathcal N$ satisfies relations:
\[d_{j}'\circ _{t} d_{k}'=\sum_{i=1}^{n}\alpha _{i,j,k}(t)d_{i}'\]
 then in $N_{s}$, for a sufficiently small $s$, we have relations:
\[d_{j,s}\circ_{s} d_{k,s}=\sum_{i=1}^{n}\alpha _{i,j,k}(s)d_{i,s}.\]
 By Remark \ref{xy} these relations for a Groebner base of $N_{s}$, so they are  defining relations for $N_{s}$. 

 Note that $N_{s}'$ is spanned by $n$ elements $d_{1}''', \ldots , d_{n}'''$ so is at most $n$-dimensional.
 Also, since $N_{s}'$ satisfies the above defining relations of $N_{s}$ (where $d_{i,s}$ corresponds to $d_{i}'''$), therefore the $\mathbb C$-algebra $N_{s}'$ is a homomorphic image of the $\mathbb C$-algebra $N_{s}$. 
Consequently, we see that  $N_{s}$ is isomorphic to $N_{s}'$, provided that $N_{s}'$ has dimension $n$.
 
{\em Part $1$.}
 We will now show that $N_{s}'$ has dimension $n$ for sufficiently small real numbers $s>0$. 
  Notice that each $d_{i}''$ is in \[C=t^{-\gamma}\oplus _{i=1}^{n'}M_{n_{i}}({\bar {\mathbb C}}\{t\}+\cdots +{\bar {\mathbb C}}\{t\}u_{i}^{ \deg (g_{i}(x))-1})\subseteq E,\] for some integer $\gamma $ (since $d_{1}, \ldots , d_{n}$is a finite set and $\mathcal N $ is a formal deformation so $d_{i}\circ _{t} d_{j}\in \sum_{i=1}^{n}{\mathbb C}\{t\}d_{i}$).
  Let $e_{1}, \ldots , e_{n}$ be a basis of this linear space which is made by matrices whose all entries are zero except of one entry which is either $1$ or $u_{i}^{t}$ for some $0<t<\deg (g_{i}(x))$.  
  
Let $e_{1}', \ldots , e_{n}'$ be the corresponding base of the algebra 
\[B=\oplus _{i=1}^{n'}M_{n_{i}}({ {\mathbb C}}+\cdots +{ {\mathbb C}}v_{i}^{ \deg (g_{i}(x))-1}),\]
 so each $e_{i}'$ is a matrix whose all entries are zero except of one entry which is either $1$ or $v_{i}^{t}$ for some $t<\deg (g_{i}(x))$.  Notice that $\sigma (e_{i})=e_{i}'$.   

We can write each $d_{i}''$ as a linear combination of elements  $e_{1}, \ldots , e_{n}$  and with entries from $K$.
 We obtain that for some  $h(t)$ and each $\alpha _{i,j}(t)\in {\bar {\mathbb C}}\{t\}$ we have 
\[ h(t)d_{j}''=\sum_{i=1}^{n}\alpha _{i,j}(t)e_{i} .\]
 Now we can take a sufficiently small $s$ such that entries of matrices $d_{i}''$ are convergent for $t\in (0, s+e)$ for some real number $e>0$. 
 By taking homomorphism $\sigma $ to each entry of matrices $d_{i}$ (for sufficiently small $s$ so $h(s)$ is not zero) we obtain 
\[h(s) d_{j}'''=\sum_{i=1}^{n}\alpha _{i,j}(s)e_{i}'.\]
Notice that since $d_{1}'', \ldots , d_{n}''$ are linearly independent over $K$ it follows that the determinant of the  $n$ by $n$ matrix  whose $i,j$-entry is $\alpha _{i,j}(t)$ is not zero, so it is a power series from ${\bar {\mathbb C}}\{t
\}$. For a sufficiently small $s>0$ this power series specialised at $t=s$ is not zero (since it is convergent so coefficient at $t^{i}$ is smaller than $c^{i}$ for some constant $c$ and for each $i$). Therefore, 
the determinant of the $n$ by $n$ matrix whose $i,j$-entry is $\alpha _{i,j}(s)$ is nonzero.
 Consequently, elements $d_{1}''', \ldots , d_{n}'''$ are linearly independent over ${\mathbb C}$ as required (for sufficiently small $s$).

{\em Part $2$.} We will now show that 
$N_{s}'$ is isomorphic to $A$, for sufficiently small $s$. We will first show that the polynomial $g_{i,s}(x)$ obtained by specialising entries of the polynomial $g_{i}(x)$ at $t=s$ has no multiple roots (for sufficiently small $s$).
 This follows, because if $F$ is a field of characteristic zero then a polynomial $g(x)\in F[x]$ has multiple roots, if  and only if the polynomials $g(x)$ and its derivative $g'(x)$ have a common root, and hence a common factor. The proof is then the same as the proof of Lemma $8$, \cite{Wemyss1}. 
 
We now see that by the construction that $N_{s}'$ is a $\mathbb C$-subalgebra of the algebra
\[\oplus _{i=1}^{n'} M_{n_{i}}({ {\mathbb C}}[v_{i}]).\]  We know by assumptions that this $\mathbb C$-algebra has dimension $n$ (since $E$ has dimension $n$ as $K$-algebra by Remark \ref{xy}).  Recall that  $N_{s}'$ is $n$-dimensional algebra by part $1$ above. It follows that 
$N_{s}'= \oplus _{i=1}^{n'} M_{n_{i}}({{\mathbb C}}[v_{i}])$.
 It suffices to show that $\oplus _{i=1}^{n'} M_{n_{i}}({ {\mathbb C}}[v_{i}])$ is isomorphic to $A$ as $\mathbb C$-algebra. This follows since $g_{i,s}(x)$ has no multiple roots and hence ${\mathbb C}[v_{i}]$,  is a semisimple algebra, isomorphic to ${\mathbb C}^{\oplus \deg (g_{i}(x))}$. 
 Therefore, $N_{s}'$ is a semisimple algebra for sufficiently small $s'$.   

$ $

{\em Part $3$.} Recall that  $\mathcal N$ is a strongly flat deformation from $N$ to $A$. Therefore, by Proposition \ref{polynomialidentity} we have
$e_{j}(E)=e_{j}(A)$. Observe on the other hand,  that 
  by part $2$, for each $j$ and for sufficiently small $s$, $e_{j}(N_{s})=e_{j}(E)$. Hence for sufficiently small $s$, 
 $N_{s}$ is isomorphic to $A$.

\begin{proposition}\label{Aga}
 Let $N, A'$ be two finite-dimensional $\mathbb C$-algebras of the same dimension $n$ with $A'$ semisimple.   Let $\mathcal N$ be a flat deformation of $N$ (in the sense of Definition \ref{flat}) which deforms to $A'$ at $t=s$.  Let ${\bar {\mathcal N}}$ be the ${\bar {\mathbb C}}$-algebra defined in  Definition \ref{76}. 
  Suppose that  ${\bar {\mathcal N}}$ is a semisimple ${\bar {\mathbb C}}$-algebra. Let $E$ be as in Definition \ref{76}. Then  $E$ is semisimple. Moreover, $\mathcal N$ is a strongly flat deformation from $N$ to some semisimple $\mathbb C$-algebra $A$. \end{proposition}
\begin{proof}  {\em Part $1$.} We will first show that $E$ is well defined and semisimple. Note that ${\bar {\mathcal N}}$ is a ${\bar {\mathbb C}}$-algebra, and hence it can be embedded into matrix algebra 
 $M_{n}({\bar {\mathbb C}})\subseteq M_{n}(K)$, so $E$  is well defined.
 
 Now we will show that $E$ is a  semisimple algebra. Suppose on the contrary that  $E$ has a nilpotent ideal generated by some element $d_{1}h_{1}(t)+\ldots +d_{n}h_{n}(t)$ for  
$h_{1}, \ldots , h_{n}\in K$, then by multiplying $h_{1}, \ldots , h_{n}$ by appropriate $h(t)\in {\bar {\mathbb C}}$ we obtain that $\mathcal N$ has a nilpotent ideal generated by $h(t)h_{1}(t)d_{1}+\ldots +h(t)h_{n}(t)d_{n}(t)$.
 A contradiction as ${\bar {\mathcal N}}$ is semisimple.  

{\em Part $2$.} The same proof as in Proposition \ref{main} works, when taking only  parts $1$ and $ 2$ of this proof.  Therefore,   for sufficiently small $s>0$,
 $\mathcal N$ specialised at $t=s$ gives an algebra isomorphic to some semisimple algebra $A$. 
 Because there is only a finite number of non-isomorphic semisimple $\mathbb C$-algebras of dimension $n$, it  follows that $\mathcal N$ is a strongly flat deformation to some algebra $A$.

\end{proof}

\section{Applications}\label{applications}

 We first introduce some notation.
\begin{notation}\label{et}
Let $N, A$ be finite-dimensional $\mathbb C$-algebras with $A$ semisimple, and let $\mathcal N$ be a strongly flat deformation from $N$ to $A$. 
 Let $s>0$ be a real number (sufficiently small so $N_{s}$ is defined for this $s$).

Let $d_{i}, d_{i}', d_{i}'', d_{i,s}$ be as in Notation \ref{m1} for $i=1, \ldots , n$.

 Let $w_{i}(d_{1}, \ldots , d_{n})$ be a linear combination over $\mathbb C$ of products of elements $d_{1}, \ldots , d_{n}\in N$ for $i=1,2, \ldots  $ under the multiplication from $N$.
  
Let $T(d_{1}, \ldots , d_{n})$ be the linear space over $\mathbb C$ spanned by elements $w_{i}(d_{1}, \ldots , d_{k})$ for $i=1,2, \ldots  $. 

Let $w_{i,s}(d_{1,s}, \ldots , d_{n,s})$ be the  corresponding linear combination over $\mathbb C$ of products of elements $d_{1,s}, \ldots , d_{n,s}\in N_{s}$ for $i=1,2, \ldots , $ under the multiplication from $A=N_{s}$.

 That means that in $w_{i,s}(d_{1,s}, \ldots , d_{n,s})$ we used element $d_{j,s}$ instead of $d_{j}$ and we used multiplication from $N_{s}$ instead of multiplication from $ N$ to multiply elements $d_{j,s}$.

 Let $T_{s}(d_{1,s}, \ldots , d_{n,s})$ be the linear space over $\mathbb C$ spanned by elements $w_{i,s}(d_{1,s}, \ldots , d_{n,s})$ for $i=1,2, \ldots  $.

 Let $w_{i,{\mathcal N}}(d_{1}', \ldots , d_{n}')$ be the  corresponding linear combination over $\mathbb C$ of products of elements $d_{1}', \ldots , d_{n}'\in {\bar {\mathcal N}}$ for $i=1,2, \ldots , n$ under the multiplication from ${\bar {\mathcal N}}$.
 Let $T_{\mathcal N}(d_{1}', \ldots , d_{n})'$ be the linear space over $\mathbb C$ spanned by elements $w_{i}(d_{1}, \ldots , d_{n})$ for $i=1,2, \ldots $.

Let 
$w_{i,E}(d_{1}'', \ldots , d_{n})$ be the  corresponding linear combination over $\mathbb C$ of products of elements $d_{1}'', \ldots , d_{n}''\in E$ for $i=1,2, \ldots $ under the multiplication from $E$.

Let $T_{E}(d_{1}'', \ldots , d_{n}'')$ be the linear $K$-space of $E$ over $\mathbb C$ spanned by elements $w_{i}(d_{1}'', \ldots , d_{n}'')$ for $i=1,2, \ldots $.

\end{notation}
 We have the following application of the Theorem \ref{polynomialidentity}. 
\begin{proposition}\label{ce}  Let notation be as in Notation \ref{et}.  
Then, for every real number $e>0$ there exists $s<e$ and elements $b_{1}, \ldots , b_{n}\in N_{s}$ such that 
\[\dim_{\mathbb C}T(d_{1}, \ldots , d_{n})\leq \dim_{\mathbb C}T_{s}(b_{1}, \ldots , b_{n}).\]
 
\end{proposition}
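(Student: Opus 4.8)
The plan is to reduce the whole statement to lower semicontinuity of the rank of a single coefficient matrix under specialisation. Fix the sequence of formal expressions $w_1,w_2,\dots$ of Notation \ref{et}. Using the relations of Remark \ref{xy} — which form a Groebner basis of $\bar{\mathcal N}$ and whose structure constants $\alpha_{i,j,k}(t)$ lie in ${\bar{\mathbb C}}\{t\}$ — I can rewrite each $w_i$, evaluated in $\bar{\mathcal N}$, uniquely in the basis as $w_{i,\mathcal N}(d_1',\dots,d_n') = \sum_{l=1}^n c_{i,l}(t)\,d_l'$, where each $c_{i,l}(t)$ is built from the $\alpha_{i,j,k}(t)$ by finitely many additions and multiplications and hence again lies in ${\bar{\mathbb C}}\{t\}$ (this ring being closed under those operations). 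The key point is that the \emph{same} functions $c_{i,l}$ control all three settings at once: the assignment $\sum_i f_i(t)d_i' \mapsto \sum_i f_i(\tau)d_i$ is a unital $\mathbb C$-algebra homomorphism $\bar{\mathcal N}\to N$ when $\tau=0$ and $\bar{\mathcal N}\to N_s$ when $\tau=s$ (it respects multiplication because the structure constants of $N$, resp. $N_s$, are the substitutions $t=\tau$ of those of $\bar{\mathcal N}$), so that $w_i(d_1,\dots,d_n)=\sum_l c_{i,l}(0)d_l$ in $N$, $w_{i,s}(d_{1,s},\dots,d_{n,s})=\sum_l c_{i,l}(s)d_{l,s}$ in $N_s$, and, after base change to $K$, $w_{i,E}(d_1'',\dots,d_n'')=\sum_l c_{i,l}(t)d_l''$ in $E$.

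Next I would read off the relevant dimensions as ranks of the matrix $C(t)=\big(c_{i,l}(t)\big)$ with rows $i=1,2,\dots$ and $n$ columns. Since $d_1,\dots,d_n$ are linearly independent in $N$, $\dim_{\mathbb C}T(d_1,\dots,d_n)=\mathrm{rank}_{\mathbb C}C(0)$; since $d_{1,s},\dots,d_{n,s}$ are linearly independent in $N_s$ for all sufficiently small $s>0$ (because $\dim_{\mathbb C}N_s=n$ by Remark \ref{xy}, equivalently $N_s\cong A$ by strong flatness), $\dim_{\mathbb C}T_s(d_{1,s},\dots,d_{n,s})=\mathrm{rank}_{\mathbb C}C(s)$; and likewise $\dim_K T_E(d_1'',\dots,d_n'')=\mathrm{rank}_K C(t)$. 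All three ranks are at most $n$, so each is witnessed by a single square submatrix of the appropriate size with nonzero determinant.

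Finally I would invoke lower semicontinuity of rank at $t=0$: set $k=\mathrm{rank}_{\mathbb C}C(0)$ and fix a $k\times k$ submatrix of $C(0)$ with nonzero determinant; the corresponding minor of $C(t)$ is a convergent power series $\delta(t)\in{\bar{\mathbb C}}\{t\}$ with $\delta(0)\neq 0$, so by continuity $\delta(s)\neq 0$ for all sufficiently small $s>0$, whence $\mathrm{rank}_{\mathbb C}C(s)\geq k$. Given $e>0$, choose $0<s<e$ small enough that $N_s$ is defined and isomorphic to $A$, that $\dim_{\mathbb C}N_s=n$, and that $\delta(s)\neq 0$, and put $b_i:=d_{i,s}$; then $\dim_{\mathbb C}T(d_1,\dots,d_n)=k\leq \mathrm{rank}_{\mathbb C}C(s)=\dim_{\mathbb C}T_s(b_1,\dots,b_n)$, as required. (The same minor argument in fact gives $\mathrm{rank}_{\mathbb C}C(s)=\mathrm{rank}_K C(t)$ for small $s$, so the inequality factors as $\dim_{\mathbb C}T(d_1,\dots,d_n)\leq \dim_K T_E(d_1'',\dots,d_n'')=\dim_{\mathbb C}T_s(d_{1,s},\dots,d_{n,s})$, tying this to the circle of ideas around Proposition \ref{polynomialidentity}.) I expect the only step needing genuine care to be the first paragraph — verifying that the rewriting coefficients really land in ${\bar{\mathbb C}}\{t\}$ and that specialisation at $t=0$ and at $t=s$ are honest algebra homomorphisms identifying $w_i$ and $w_{i,s}$ with $\sum_l c_{i,l}(0)d_l$ and $\sum_l c_{i,l}(s)d_{l,s}$; everything after that is elementary linear algebra together with the fact that a nonzero convergent power series does not vanish near $0$.
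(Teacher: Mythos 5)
Your proof is correct, and it takes a genuinely more direct route than the paper's. The paper proves the inequality in two stages through the $K$-algebra $E$: first $\dim_{\mathbb C}T(d_{1},\ldots,d_{n})\leq\dim_{K}T_{E}(d_{1}'',\ldots,d_{n}'')$ by showing that $\mathbb C$-linearly independent evaluations lift to $K$-linearly independent ones (comparing lowest-order coefficients in $t$), and then $\dim_{K}T_{E}\leq\dim_{\mathbb C}T_{s}$ by a codimension argument: choosing $\beta$ basis elements spanning $E$ modulo $T_{E}$, clearing denominators to get relations over ${\bar{\mathbb C}}\{t\}$, and specialising at small $s'$ where the cleared denominator does not vanish. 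You instead encode all three algebras ($N$ at $t=0$, $N_{s}$ at $t=s$, and $E$ over $K$) in a single coefficient matrix $C(t)$ over ${\bar{\mathbb C}}\{t\}$ and reduce the claim to lower semicontinuity of rank via a single nonvanishing minor; this bypasses $E$ entirely and replaces both of the paper's steps with one application of the fact that a convergent power series nonzero at $0$ is nonzero nearby. The two mechanisms are cousins (the paper's lowest-order-term and clearing-denominators arguments are in essence rank comparisons), but yours is more compact and isolates the real content; the paper's heavier machinery is inherited from Proposition \ref{polynomialidentity}, where the $I_j'$ are ideals and the iterative $D_{f}$ construction is genuinely needed, whereas here $T$ is only a linear span so your streamlining is legitimate. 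The only points needing the care you already flag are that the structure constants $\alpha_{i,j,k}(t)$ lie in ${\bar{\mathbb C}}\{t\}$ (which the paper also assumes, from strong flatness) and that $s$ must be chosen, using strong flatness, so that $N_{s}\cong A$ and hence $d_{1,s},\ldots,d_{n,s}$ are linearly independent — otherwise $\dim_{\mathbb C}T_{s}$ could drop below $\operatorname{rank}_{\mathbb C}C(s)$; since the proposition only asks for some $s<e$, this is fine.
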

\begin{proof}
  {\em Part $1$.} Observe first that 
 \[\dim_{K} T_{E}(d_{1}'', \ldots , d_{n}'')\geq \dim_{\mathbb C}T(d_{1}, \ldots , d_{n}).\]
 To prove the above inequality we can  consider only these $w_{i}(d_{1}, \ldots , d_{n})$ which are nonzero and linearly independent over $\mathbb C$. Without restricting the generality we can assume that these are $w_{i}(d_{1}, \ldots , d_{n})$ for 
 $i\leq m'$ for some $m'$.
 Then the corresponding $w_{i,{\mathcal N}}(d_{1}', \ldots , d_{n}')$
 are linearly independent over $\mathbb C$, as otherwise there would be (not all zero) $\alpha _{i}(t)\in {\mathbb C}\{t\}$ such that 
 $\sum_{i=1}^{m'}\alpha _{i}w_{i, {\mathcal N}}(d_{1}', \ldots , d_{n}')=0$, by comparing the elements at the lowest possible power of $t$ (which has a non-zero component) we obtain that 
 it is not possible (since $w_{i}(d_{1}, \ldots , d_{n})$ which are nonzero and linearly independent over $\mathbb C$).

 This implies that elements  $w_{i, {\mathcal N}}(d_{1}', \ldots , d_{n}')$ are linearly independent over $K$.
 This concludes the proof that 
 \[\dim_{K} T_{E}(d_{1}'', \ldots , d_{k}'')\geq \dim _{\mathbb C}T(d_{1}, \ldots , d_{n}).\]

{\em Part $2$.} It suffices to show that \[\dim_{K} T_{E}(d_{1}'', \ldots , d_{n}'')\leq \dim_{\mathbb C}T_{s}(d_{1,s}, \ldots , d_{n,s}).\] 
The proof is very similar to the part $1$ of the proof of Proposition \ref{polynomialidentity}.
 
 Suppose that $\dim_{K} E-\dim_{K} T_{E}(d_{1}'', \ldots , d_{n}'') = \beta $.

As the enumeration of elements $d_{i}''$ does not matter, we can assume that 
 \[E=span _{K}\sum_{i=1}^{\beta }d_{i''}K+T_{E}(d_{1}'', \ldots , d_{n}'').\]
Therefore, for some $ g_{1}(t), \ldots , g_{\beta }(t)\in K$ we have 
\[d_{\beta +1}''\in \sum_{i=1}^{\beta } g_{i}(t)d_{i}''+T_{E}(d_{1}'', \ldots , d_{n}'').\]

Therefore there are $h(t), h_{1}(t), \ldots , h_{\beta }(t)\in {\bar {\mathbb C}}\{t\}$
 with $h(t)\neq 0$ and $c(t)\in   {\bar {\mathbb C}}\{t\}T_{E}(d_{1}'', \ldots , d_{n}'')$ 
 such that 
\[h(t)d_{\beta +1}''= \sum_{i=1}^{\beta } h_{i}(t)d_{i}''+c(t).\]

 We can assume that $c({t})$ is a linear combination over  ${\bar {\mathbb C}}\{t\}$ of elements from the set $\{d_{1}'', \ldots , d_{n}''\}$ (because if needed we can adjust $h(t)$).
 So we have \[c(t)=h(t)d_{\beta +1}-\sum_{i=1}^{\beta } h_{i}(t)d_{i}''.\]

This is  relation which holds in the ${\bar {\mathbb C}}\{t\}$-algebra $\bar {\mathcal N}$. 
 Recall that $\bar {\mathcal N}$ specialised at $t=s$ gives multiplication in $N_{s}$-the algebra spanned by $d_{,s}, \ldots , d_{n,s}$ (as in Notation \ref{et}).

Let $0<s'<s$.  By specialising $c(t)$ in $t=s'$ we get relation in $N_{s'}$. 
\[c(s')=h(s')d_{\beta +1}-\sum_{i=1}^{\beta } h_{i}(s')d_{i},\]
 where $c(s')$ is obtained from $c(t)$ by specialising at each place
 the multiplication for $t=s'$, for example if $c(t)=(d_{1}\circ _{t}d_{2}-d_{2}\circ _{t}d_{1})\circ _{t}d_{3}$, 
then $c(s')=(d_{1}\circ _{s'}d_{2}-d_{2}\circ _{s'}d_{1})\circ _{s'}d_{3}$,
 Note that $c(s')\in T(d_{1,s}, \ldots , d_{k,s})$ since $c(t)\in T(d_{1}'', \ldots , d_{k}'')$, by the construction. 

$ $

Because $h_{i}(t), h(t)$ are absolutely convergent for $0< t<s$  
 there exists $e>0$ such that for  all $0<s'<e$ we have  $h(s')\neq 0$, therefore by specialising at $t=s'$ we get,
 
\[d_{\beta +1,s'}\in \sum_{i=1}^{\beta }{ \mathbb C}d_{i,s'}+T(d_{1,s'}, \ldots , d_{n,s'})\]

We can apply the same argument and obtain for $i>0$  
\[d_{\beta +i,s'}\in \sum_{i=1}^{\beta }{ \mathbb C}d_{i,s'}+T(d_{1,s'}, \ldots , d_{n,s'})\]

Therefore $dim_{\mathbb C}T(d_{1,s'}, \ldots , d_{n,s'})\geq n-\beta $. Since 
 by assumption $dim_{K}T_{E}(d_{1}'', \ldots , d_{n}'')=n-\beta $ we obtain 
$dim_{\mathbb C}T(d_{1,s'}, \ldots , d_{n,s'})\geq dim_{K}T_{E}(d_{1}'', \ldots , d_{n}'')$ for every $j$.
\end{proof}

Proposition \ref{ce} gives a partial answer to question $6.5$ from \cite{DDS}.
 
\begin{corollary}\label{cde} 
 Let $N$ be a $\mathbb C$-algebra which is generated by elements $x,y$, and suppose that elements $1, x, \cdots x^{n}$ and $y, yx, \cdots , yx^{n}$ span $N$ as a linear space over $\mathbb C$. 
 Then $N$ cannot be deformed strongly flatly to any semisimple algebra $A$ (of the same dimension) which contains as a summand the matrix algebra of dimension larger than $2$ (over complex numbers).
\end{corollary}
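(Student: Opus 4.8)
The plan is to argue by contradiction, applying Proposition \ref{ce} to the very family of monomials that the hypothesis of the corollary declares to span $N$. Suppose $\mathcal N$ were a strongly flat deformation from $N$ to a semisimple $A$ of the same dimension $n$, and write $A\cong M_k(\mathbb C)\oplus C$ with $k\ge 3$ and $C$ semisimple. Fix a basis $d_1,\dots,d_n$ of $N$; since $1,x,y\in N$, each is a $\mathbb C$-linear combination of the $d_i$, so the monomials $1,x,\dots,x^{n},\,y,yx,\dots,yx^{n}$ are all $\mathbb C$-linear combinations of products of the $d_i$, and hence form a legitimate family $w_i$ in the sense of Notation \ref{et}. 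Let $T=T(d_1,\dots,d_n)$ be their $\mathbb C$-span in $N$; by the hypothesis on $N$ we have $T=N$, so $\dim_{\mathbb C}T=n$.

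Next I would apply Proposition \ref{ce}, choosing the bound there small enough that (by Proposition \ref{main}) the specialisation $N_s$ is already isomorphic to $A$: this produces some $s>0$ and elements $b_1,\dots,b_n\in N_s$ with $\dim_{\mathbb C}T_s(b_1,\dots,b_n)\ge\dim_{\mathbb C}T=n$ and $N_s\cong A$. Since $\dim_{\mathbb C}N_s=n$, this forces $T_s(b_1,\dots,b_n)=N_s$. Now I would unwind $T_s$: writing $\bar x,\bar y\in N_s$ for the images of $x,y$ under $d_i\mapsto b_i$, the word corresponding to $x^{j}$ is $\bar x^{\,j}$ and the one corresponding to $yx^{j}$ is $\bar y\,\bar x^{\,j}$, powers and products being taken in $N_s$; hence
\[
N_s\ =\ T_s\ \subseteq\ \mathbb C[\bar x]+\bar y\,\mathbb C[\bar x],
\]
where $\mathbb C[\bar x]$ denotes the unital subalgebra of $N_s$ generated by $\bar x$.

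Finally I would project: let $\pi\colon N_s\cong A\to M_k(\mathbb C)$ be the projection onto the matrix summand, a surjective homomorphism of $\mathbb C$-algebras. Applying $\pi$ to the inclusion above yields $M_k(\mathbb C)=\pi(N_s)\subseteq\mathbb C[\pi(\bar x)]+\pi(\bar y)\,\mathbb C[\pi(\bar x)]$. But $\pi(\bar x)$ is a single $k\times k$ matrix, so by Cayley--Hamilton it satisfies a polynomial of degree at most $k$ and $\dim_{\mathbb C}\mathbb C[\pi(\bar x)]\le k$; therefore $k^{2}=\dim_{\mathbb C}M_k(\mathbb C)\le 2k$, which is false for $k\ge 3$. (For $k=2$ one only gets $k^2=4=2k$, which is exactly why the matrix size must exceed $2$.) The essential idea is the choice of this monomial family: the spanning hypothesis on $N$ turns, under the deformation, into the demand that the monomials $1,\bar x,\dots,\bar x^{n},\bar y,\dots,\bar y\bar x^{n}$ span the matrix summand $M_k(\mathbb C)$, which is impossible once $k\ge 3$ since one element of $M_k(\mathbb C)$ generates only a commutative subalgebra of dimension at most $k$. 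The one point needing care is to apply Proposition \ref{ce} to this fixed finite family of words and to verify that its specialisations in $N_s$ are genuinely the powers $\bar x^{\,j}$ and $\bar y\bar x^{\,j}$ --- rather than to the span of all words, which would collapse to the vacuous statement $T=N$, $T_s=N_s$.
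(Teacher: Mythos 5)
Your proposal is correct and follows essentially the same route as the paper: take the family of monomials $x^{i}$, $yx^{i}$ so that $T=N$ has dimension $n$, invoke Proposition \ref{ce} to transport this to $N_s\cong A$, project onto the matrix summand $M_k(\mathbb C)$, and use Cayley--Hamilton to bound the span of $\{\bar x^{\,i},\bar y\bar x^{\,i}\}$ by $2k<k^2$ for $k\ge 3$. Your treatment is if anything slightly more careful than the paper's at the point where the words $w_i$ must be expressed through the chosen basis $d_1,\dots,d_n$ of Notation \ref{et}, but the substance is identical.
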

\begin{proof} We use a proof by contradiction.  Let $n=\dim_{\mathbb C}A=\dim _{\mathbb C}N$. Take $w_{2i}(x,y)=x^{i}$, $w_{2i+1}(x,y)=yx^{i}$ for $i=0, 1,  \ldots, n$. 
 Denote $d_{1}=x, d_{2}=y$ then $T(d_{1}, d_{2})=T(x,y)$ has dimension $n$.

Let $b_{1}, b_{2}\in N_{s}$. Note that $T_{s}(b_{1}, b_{2})$ 
 cannot have dimension $n$, as then it would span the summand (which we call $S$)  of $N_{s}=A$ which is a matrix ring of dimension $j>2$, so it has dimension at least $j^{2}\geq 3^{2}$. Let $b_{1}', b_{2}'$ be the summands of $b_{1}, b_{2}$ corresponding to $S$. Then elements $b_{1}'^{i}$ and $b_{2}'b_{1}'^{i}$ span $S$ (where multiplication is in this matrix ring), so they span vector space of dimension $j^{2}$.
However elements  $1=b_{1}'^{0}, b_{1}'^{1}, b_{1}'^{2},\cdots , b_{1}'^{j}$ (in the summand corresponding to this matrix ring) are linearly dependent over $\mathbb C$ 
  for $i\geq j$ (because every matrix of dimension $j$ satisfies its characteristic polynomial which has degree $j$) so we have at most 2j elements linearly independent over $\mathbb C$ in the vector space spanned by elements
 $b_{1}'^{i}$ and $b_{2}'b_{1}'^{i}$ for $i=0,1, \ldots , n$.
 A contradiction.
\end{proof}

Proposition \ref{ce}  can be  used to investigate a question of Michael Wemyss  find semisimple algebras to which a given contraction algebra cannot  be deformed.

\begin{example}\label{ef}  There are many Acons  in the paper \cite{DW} which satisfy the  assumptions of Corollary \ref{cde}, since they are generated by elements $x,y$ and satisfy relation $yx=-xy$, hence elements $x^{i}$, $yx^{i}$ span 
this Acon as a linear space over $\mathbb C$. For example the following Acon of dimension $12$ generated by $x,y$ and satisfying the following defining relations:
$y^{6}-x^{3}-y^{2}x$, $y^{4}x+x^{2}+y^{2}$, $x^{4}-y^{4}$, $yx^{2}+y^{3}$, $xy+yx$.

 Corollary \ref{cde} shows that the only semisimple algebra $A$ to which  this Acon can be deformed strongly flatly  are direct sums of $\mathbb C$ and $M_{2}({\mathbb C})$ (the two by two matrices with entries from $\mathbb C$). 
\end{example}

\section{Results related to the comment from Joachim Jelisiejew}
 We now provide the proof of the case (ii) from the introduction  cannot hold, using  the idea suggested by Joachim Jelisiejew  as in the introduction:
\begin{proposition}\label{JJ}
 Let $N, A$ be two finite-dimensional $\mathbb C$-algebras of the same dimension $n$ with $A$ semisimple.   If $\mathcal N$ is a flat deformation of $N$ (in the sense of Definition \ref{flat}) which deforms to $A$ at $t=s$ 
 then $\mathcal N$ deforms to $A$ at $t\in [s-\epsilon, s+\epsilon ]$ for some real number $\epsilon >0$. Moreover,   the algebra  ${\bar {\mathcal N}}$ defined in Definition \ref{76}  is a semisimple ${\bar {\mathbb C}}$-algebra.  
\end{proposition}
\begin{proof} By Proposition \ref{Aga} it suffices to consider the case when ${\bar {\mathcal N}}$ 
 has a non-zero nilpotent ideal. By Proposition $2.5$ \cite{Gabriel}   there is a real number $\epsilon $ such that $\mathcal N$ at $t\in [s-\epsilon, s+\epsilon ]$ deforms to $A$ (as explained in the introduction in the comment from Joachim Jelisiejew). 

 Suppose now on the contrary that ${\bar {\mathcal N}}$ is not semisimple. Then it has a non-zero nilpotent ideal, generated  by some power series $\sum d_{i}h_{i}(t)$ where $d_{i}$ are as in Definition \ref{76} and $h(i)\in {\bar {\mathbb C}}\{t\}$.
 Moreover, we can assume that $h_{i}(t)$ are convergent at $(0,c)$ because the formulas for multiplication in $\mathcal N$ use power series convergent at some interval $(0,c)$, where $c$ is as in Definition \ref{stronglyflat}, so $s\in (0,c)$ (see additional explanations at the end of this proof). 
 
 By a theorem from analytic series there exists $q\in [s-\epsilon, s+\epsilon ]$ such that $\sum d_{i}h_{i}(q)$ is a non-zero element in the $N_{q}$ (the specialisation of $\mathcal N$ at $t=q$). Therefore, $\sum d_{i}h_{i}(q)$ generates a non-zero nilpotent ideal in $N_{q}$.  Recall that  $N_{q}$ is isomorphic to $A$, and hence is semisimple, a contradiction.  

{\em Explanation why we can choose $h_{i}(x)$ which are convergent at $[0,c)$}:
 Let ${{\mathbb C}\{t\}}_{c}$ denote the set of power series convergent at $t\in [0, c)$. Then, let  ${\mathcal N} ^{c}$ denote the algebra over ${{\mathbb C}\{t\}}_{c}$ defined analogously as ${\bar {\mathcal N}}$ but using 
${{\mathbb C}\{t\}}_{c}$ instead of ${\bar {\mathbb C}}\{t\}$. So 
 ${\mathcal N}^{c}$ is an algebra  over the algebra of power series convergent at some interval $(0,c)$ (where $c$ is as in Definition \ref{stronglyflat}, so $s\in (0,c)$).  If ${\mathcal N} ^{c}$ is semisimple then it can be embedded into some algebra $E^{c}$ (constructed analogously as $E$) which is a direct sum of matrix rings over $K_{c}$ where $K_{c}$ is the field of fractions of ${{\mathbb C}\{t\}}_{c}$, and it has dimension $n$. Then by extending scalars 
 we get that ${\bar {\mathcal N}}$ can be embedded into $E_{c}\otimes _{K_{c}}K$ which a direct sum of matrix rings over $K$, which also has dimension $n$ as a vector space over $K$. It follows that $\mathcal {\bar N}$ is semisimple, as it cannot have nilpotent ideals. 
\end{proof} 

From Propositions \ref{JJ} and \ref{Aga} we obtain: 
\begin{corollary}\label{west}
 Let $N, B$ be two finite-dimensional $\mathbb C$-algebras of the same dimension $n$ with $B$ semisimple and let $\mathcal N$ be a formal deformation of $N$.   Suppose that $\mathcal N$ is a flat deformation of $N$ (in the sense of Definition \ref{flat}) which deforms to $B$ 
 (when specialised at $t=t_{0}$ for some $0<t_{0}\in \mathbb R$).

Then, it is not possible that  for all sufficiently small real numbers $s>0$ the algebra $\mathcal N$ specialised at $t=s$ is not semisimple, and hence it has a non-zero nilpotent ideal. 
\end{corollary}
\begin{proof}  Notice that ${\bar {\mathcal N}}$ specialised at $t=t_{0}$ gives $B$ for some $t_{0}\in \mathbb R$ (in the sense of  Definition \ref{flat}),  and hence
 ${\bar {\mathcal N}}$ is semisimple by Proposition \ref{JJ}.
  Let $E$ be as in Definition \ref{76}. By Proposition \ref{Aga} the algebra $E$ is semisimple.  

  By Proposition \ref{Aga}, since $\mathcal N$ is semisimple. 
  then for sufficiently small $s>0$
 $\mathcal N$ specialised at $t=s$ gives an algebra isomorphic to some semisimple algebra $A$. 
 This concludes the proof. 
\end{proof}


We now introduce a definition:

 \begin{definition}\label{type} (A type of an algebra $R$).
 Let $R$ be a finitely-dimensional semisimple algebra over a commutative algebra $S$ (so $R$ is an $S$-algebra) and let $d_{1}, \ldots , d_{n}$ be a basis of this algebra as a linear space over $S$. Assume that $S$  is a $\mathbb C$-  subalgebra of ${\mathbb C}\{x\}$.  
Let $I_{m}'$ denote the ideal generated in $R$ by elements $s_{2m}(a_{1}, \ldots , a_{2m})$ where $s_{2m}$ is the identity for $m$ by $m$ matrices over commutative rings. Note that  $s_{2m}(a_{1}, \ldots , a_{2m})$ is defined in the proof of Proposition \ref{polynomialidentity} for all  $a_{1}, \ldots , a_{2m}\in R$ (so $I_{m}(a_{1}, \ldots , a_{2m})$ is zero if $a_{1}, \ldots , a_{m}$ belong to a $m$ by $m$ matrix ring with entries from some commutative ring).

 By $J_{m}$ we denote the $S$-linear subspace of $R$ such that $r\in J_{m}$ if and only if $r\cdot s\in I_{m}$ for some $s\in S$, $s\neq 0$. Note that if $r\cdot q\in J_{m}$ for some $0\neq q\in S$ then $r\in J_{m}$. 
 Let $c_{j}$ be the smallest number such that there are elements $r_{1}, \ldots , r_{c_{j}}\in I_{j}'$ such that 
  if $r\in J_{j}$ then $qr\in Sr_{1}+\cdots +Sr_{c_{j}}$ for some $q\neq 0, q\in S$. 
 
  We call the sequence $c_{1}, c_{2},\ldots $ the type of $A$.

Observe that if $S$ is a field, and $R=\oplus _{i=1}^{n'}M_{n_{i}}( D_{i})$ where each $D_{i}$ a finitely dimensional  field extension of $S$ then for each $i$,   $c_{i-1}-c_{i}=dim_{S}e_{i}(R)$ where  $e_{i}(R)$ is defined  
 in Definition \ref{12345} (for $K=S$).  This follows since matrix rings of dimension $m$ over commutative rings satisfy the identity $s_{2m}$, and matrix rings of dimension $m+1$ don't satisfy this identity, moreover matrix rings over fields are simple rings.
\end{definition}
 We will now provide a result obtained by combining the comment of Joachim Jelisiejew from the introduction  with some methods from section \ref{12345}. 
 We will now prove Therem \ref{main2}, which we present in a slightly reformulated form:

\begin{theorem} 
 Let $N, B, C$ be three finite-dimensional $\mathbb C$-algebras of the same dimension $n$ with $B$ and $C$  semisimple and let $\mathcal N$ be a formal deformation of $N$, hence $\mathcal N$ specialised at $t=0$ gives algebra $N.$ Suppose that $\mathcal N$ gives a flat deformation, in the sense of Definition \ref{flat}, from $N$ to $B$, so $\mathcal N$ specialised at $t=t_{0}$   (for some nonnegative  real number $t_{0}$) gives algebra $B$. Suppose, that $\mathcal N$ gives a flat deformation, in the sense of Definition \ref{flat}, from $N$ to $C$, so $\mathcal N$ specialised at  $t=t_{1}$  (for some nonnegative  real number $t_{1}$) gives algebra $C$.
 Then $B$ and $C$ are isomorphic $\mathbb C$-algebras.
\end{theorem}
 \begin{proof}  {\em An outline of the proof.}  By Propositions \ref{Aga} and \ref{JJ} and Corollary \ref{west} $\mathcal N$ is a strongly flat deformation to some semisimple algebra $A$. First it is shown that $A$ and $\mathcal N$  have the same type. 

First we will show that because $\mathcal N$ is a strongly flat deformation to some algebra $A$ then  the $\mathbb C$-algebra $A$ and $\mathbb C$-algebra $\mathcal N$ have the same type (where the type of an algebra is defined in  Definition \ref{type}). In the part $2$ we show that because specialised  at some interval gives algebras isomorphic to $B$ (by Gabriel's theorem as described in the introduction by Joachim Jelisiejew)  it follows that $\mathcal N$ and $B$ have the same type. This follows because by writing all power series appearing in multiplications of elements $d_{i}, d_{j}$ in $\mathcal N$ in variable $\bar t=t-t_{0}$,  we obtain 
 a subalgebra of $\mathcal N$ which can be extended to a deformation of $B$ at $\bar t=0$, and which is a strongly flat deformation to $B$ (by using variable $\bar t$). Observe that  using the variable $\bar t$ instead of $t$ in multiplication  does not change the type of algebra $\mathcal N$. It follows that $\mathcal N$ has the same type as $B$. The same argument applies to algebra $C$. It then follows that $A,B, C$ have the same type as $\mathbb C$-algebras and since they are semisimple $\mathbb C$-algebras they are all isomorphic $\mathbb C$-algebras. 

$ $

We will now proceed with the proof.

$ $

Without restricting generality we can assume that $t_{0}<t_{1}$. 
 By Corollary \ref{west} $\mathcal N$ gives a strongly flat deformation from $N$ to some semisimple $\mathbb C$-algebra $A$. 

$ $

{\em Part $1$.} In this part we will show that $\bar {\mathcal N}$ has the same type as $A$ (defined as in Definition \ref{type}).    Let $c$ be as in Definition \ref{flat}, so $t_{1}\in (0,c)$. Let ${\mathcal N}^{c}$ be defined as ${\bar {\mathcal N}}$ but instead of 
 ${\bar {\mathbb C}}\{t\}$ using ${{\mathbb C}\{t\}}_{c}$-the algebra of power series convergent at $[0, c)$.

Let $N, {\mathcal N}, {\bar {\mathbb C}}\{t\}, E,  g_{i,j,l}(t), $
 $d_{1}, \ldots , d_{n}$ be as in Definition \ref{76}, so 
  \[d_{i}\circ _{t}d_{j}=\sum_{l=0}^{n }g_{i,j,l}(t)d_{l},\]
 for some $g_{i,j,l}(t)\in {\mathbb C}\{t\}$. Note that if $\mathcal N$ gives a flat deformation for $t\in [0,c)$ for some real number $c$ (in the sense of Definition \ref{flat}) then $g_{i,j,l}(t)$ are convergent power series at $t\in [0,c)$ for some real number  $0<c$ (and hence strongly convergent for $t\in [0,c)$).

Let $S$ be the $\mathbb C$-algebra generated by power series 
 $g_{i,j,l}(t)$. Notice that all power series belonging to $S$ are convergent at $[0,c)$ since the power series $g_{i,j,l}(t)$ are convergent at $[0,c)$.

 Let ${\mathcal N}_{1}$ be the subalgebra of $\mathcal N$ which is spanned as $S$-linear space by $d_{1}, \ldots , d_{n}$. Then ${\mathcal N}_{1}$ has a type $c_{1}, c_{2} \ldots $
 for some nonnegative integers $c_{1}, c_{2}, \ldots $ (as defined in Definition \ref{type}). 
 Observe, that the ${\bar {\mathcal N}}$ has the same type $c_{1}, c_{2}, \ldots $,  and consequently the same type as $E$ (since $\mathcal N$ was obtained from ${\bar {\mathcal N}}$ by extending the scalars).
 
$ $

{\em Part $2$. }  In this part we will show that $\bar {\mathcal N}$ has the same type as $B$ (since $\mathcal N$ specialised for $t$ at some interval $(s-\epsilon , s+\epsilon )$ gives algebras isomorphic to $B$ for some real number $\epsilon >0$).    Let $s\in (0,c)$, then we can express power series $g_{i,j,l}(t)$ in variable $\bar t=t-s$. Then 
 $g_{i,j,l}(t)=f_{i,j,t}({\bar t})$.
 Note that $f_{i,j,l}({\bar t})$ are convergent at the interval $(-e,e)$ for some $e$ (since they
 are obtained from $g_{i,j,l}(t)$ by writing them in variable ${\bar t}$).
 Denote $S_{2}$ to be the  subalgebra of the algebra $\mathbb C\{{\bar t}\}$ generated by polynomials $f_{i,j,l}({\bar t})$. 
 Let ${\mathcal N}_{2}$ be the $S_{2}$-algebra spanned by elements $d_{1}, \ldots , d_{n}$ (as a linear $S$-space) and with the multiplication 
\[d_{i}\circ _{\bar t} d_{j}=\sum_{l=0}^{n }f_{i,j,l}({\bar t})d_{l}.\]
 Then ${\mathcal N}_{2}$ has the same type as ${\mathcal N}_{1}$ (because is obtained by writing it in a variable $
\bar t$ instead of $t$).

Let $M$ be the ${\mathbb C}\{\bar t\}$ algebra obtained by extending the multiplication 
  \[d_{i}\circ _{\bar t} d_{j}=\sum_{l=0}^{n }d_{l}f_{i,j,l}({\bar t})\]
 to obtain the ${ {\mathbb C}}\{\bar t\}$ algebra.  Then specialising at at ${\bar t}=0$ we obtain an  algebra isomorphic 
 to $B$, provided that $\mathcal N$ specialised at $t=s$ gives algebra $B$.
  Similarly, for $\varepsilon \in  (0,e)$ by specialising at ${\bar t}=\varepsilon $ we obtain algebra isomorphic to $B$, since $\mathcal N$ specialised at $t=s+\varepsilon $ gives an algebra isomorphic to $B$, by Gabriel's Proposition $2.5$ \cite{Gabriel} (as mentioned in the comment by Joachim Jelisiejew in the introduction).  

  Therefore, $M$ is a formal deformation of a $\mathbb C$-algebra isomorphic to $B$. Notice that because power series $f_{i,j,l}({\bar t})$ are convergent for $t\in (-e,e)$ for some integer $e>0$ then  we can consider ${\bar {\mathbb C}}\{\bar t\}$ -the ring of  power series convergent at $\bar t$ in some interval near $0$, and obtain a subalgebra $\bar M$ of $M$ which is spanned by $d_{1}, \ldots , d_{n}$ as an algebra over ${\bar {\mathbb C}}\{\bar t\}$.  This algebra can be then embedded into algebra $E'$ over the field of convergent Laurent series in variable ${\bar t}$. By Proposition \ref{polynomialidentity} $E'$ has the same type as $B$. Notice also that $E'$ has the same type as $M$, and hence the same type as ${\mathcal N}_{2}$.
%

 Recall that  ${\mathcal N}_{2}$ has the same type as ${\mathcal N}_{1}$ as shown before, and therefore has the same type as $A$ by Proposition \ref{polynomialidentity} (because $\mathcal N$ is a strongly flat deformation from $N$ to $A$ in variable $t$).
 In conclusion $A$ and $B$ have the same type, and hence they are $\mathbb C$-algebras, hence it follows that they are isomorphic. Similarly, $A$ and $C$ are isomorphic as $\mathbb C$-algebras.
  Therefore, $B$ and $C$ are isomorphic.
\end{proof}
\begin{corollary}

 Let $N, B$ be two finite-dimensional $\mathbb C$-algebras of the same dimension $n$ with $B$ semisimple and let $\mathcal N$ be a formal deformation of $N$.   Suppose that $\mathcal N$ is a flat deformation of $N$ (in the sense of Definition \ref{flat}) which deforms to $B$. Then $\mathcal N$ is a strongly flat deformation from $N$ to $B$.
\end{corollary}

{\bf Acknowledgments.} The author acknowledges support from the
EPSRC programme grant EP/R034826/1 
 and from the EPSRC research grant EP/V008129/1.  The author is very grateful to Joachim Jelisijew for many helpful comments, especially for his remark which was included in the introduction. The author is  also very grateful to Michael Wemyss for suggesting open questions. The author is  very grateful to Uzi Vishne for providing answers related to Remark \ref{commutative}. The author would like to thank  Alicja Smoktunowicz for many helpful comments which improved the paper. Many thanks to Michael West for his help with the English language aspects of
 the paper.

\end{document}